\documentclass[12pt,reqno]{amsart}
\usepackage[headings]{fullpage}
\usepackage{amssymb,amsmath,mathtools,bbm,tikz,tikz-cd,color
}
\usepackage{stmaryrd}
\usepackage{stackengine}
\usepackage[all,cmtip]{xy}
\usepackage{url}
\usepackage{dsfont}
\usepackage{nicematrix}
\usepackage{fancybox}
\usepackage{adjustbox}
\usetikzlibrary{positioning}
\usetikzlibrary{calc}
\usetikzlibrary{decorations.markings}
\usetikzlibrary{arrows.meta}
\usetikzlibrary{calc}
\usetikzlibrary{decorations.pathmorphing}
\usetikzlibrary{decorations.pathreplacing}

\tikzstyle over=[preaction={draw,line width=6pt,white}]
\tikzstyle oversmall=[preaction={draw,line width=3pt,white}]
\tikzset{every path/.style={thick}
}
\tikzset{baseline={([yshift=-.5ex]current bounding box.center)} }

\tikzset{
    doubled/.style={
        preaction={draw=white, line width=4pt},
        preaction={draw=white, line width=4pt, transform canvas={xshift=.5cm}},
        postaction={draw, transform canvas={xshift=.5cm}}
    }
}
\tikzstyle{point}=[postaction={decorate,decoration={markings,
		mark=at position #1 with {\arrow{>}}}}]

\tikzstyle longleadsto=[
{decorate, decoration={
		zigzag,
		segment length=4,
		amplitude=.9,
		post=lineto,
		post length=3pt,
	}
}
]

\allowdisplaybreaks

\usepackage[bookmarks=true,%
colorlinks=true,%
linkcolor=blue,%
citecolor=blue,%
filecolor=blue,%
menucolor=blue,%
urlcolor=blue,%
breaklinks=true]{hyperref}
\usepackage{slashed}    

\usepackage{verbatim}
\usepackage[normalem]{ulem}
\usepackage{diagbox}

\usetikzlibrary{fit}

\newtheorem{theorem}{Theorem}[section]
\theoremstyle{definition}

\newtheorem{lemma}[theorem]{Lemma}

\newtheorem{rem}[theorem]{Remark}
\newtheorem{corollary}[theorem]{Corollary}
\newtheorem{conjecture}[theorem]{Conjecture}

\newenvironment{remark}
  {\pushQED{\qed}\rem}
  {\popQED\endrem}

\def\tr{\mathrm{tr}}

\def\BZ{\mathbbm Z}

\def\BC{\mathbbm C}

\def\la{\langle}
\def\ra{\rangle}

\def\a{\alpha}

\def\be{\begin{equation}}
	\def\ee{\end{equation}}

\def\End{\mathrm{End}}
\def\id{\mathrm{id}}

\def\LG{\mathrm{LG}}

\def\br#1{ \{ #1 \}}

\def\Uq{\mathrm{U}_q}

\makeatletter
\def\namedlabel#1#2{\begingroup
	#2%
	\def\@currentlabel{#2}%
	\phantomsection\label{#1}\endgroup
}
\makeatother

\makeatletter

\renewcommand\thepart{\@Roman\c@part}%
\renewcommand\part{%
	\if@noskipsec \leavevmode \fi
	\par
	\addvspace{6.7ex}%
	\@afterindentfalse
	\secdef\@part\@spart}
\def\@part[#1]#2{%
	\ifnum \c@secnumdepth >\m@ne
	\refstepcounter{part}%
	\addcontentsline{toc}{part}{Part~\thepart.\ #1}%
	\else
	\addcontentsline{toc}{part}{#1}%
	\fi
	{\parindent \z@ \raggedright
		\interlinepenalty \@M
		\normalfont
		\ifnum \c@secnumdepth >\m@ne
		\centering\large\scshape \partname~\thepart.%
		\hspace{1ex}%
		\fi%
		\large\scshape #2%
		\markboth{}{}\par}%
	\nobreak
	\vskip 4.7ex
	\@afterheading}
\def\@spart#1{
	\refstepcounter{part}%
	\addcontentsline{toc}{part}{#1}%
	{\parindent \z@ \raggedright
		\interlinepenalty \@M
		\normalfont
		\centering\large\scshape #1\par}%
	\nobreak
	\vskip 4.7ex
	\@afterheading}
\renewcommand*\l@part[2]{%
	\ifnum \c@tocdepth >-2\relax
	\addpenalty\@secpenalty
	\addvspace{0.75em \@plus\p@}%
	\begingroup
	\parindent \z@ \rightskip \@pnumwidth
	\parfillskip -\@pnumwidth
	{\leavevmode
		\normalsize \bfseries #1\hfil \hb@xt@\@pnumwidth{\hss #2}}\par
	\nobreak
	\if@compatibility
	\global\@nobreaktrue
	\reverypar{\global\@nobreakfalse\reverypar{}}%
	\fi
	\endgroup
	\fi}

\def\l@subsection{\@tocline{2}{0pt}{2pc}{6pc}{}}
\makeatother

\def\AS{\mathrm{AS}}

\renewcommand{\ll}{%
	{%
		\tikz[scale=0.2]{
			\draw[line width=0.5pt] (0,0) to (0,2);
			\draw[line width=0.5pt] (1,0) to (1,2);
		}%
	}%
}

\newcommand{\cc}{%
	{%
		\tikz[scale=0.2]{
			\draw[line width=0.5pt] (0,0) to (0,.25);
			\draw[line width=0.5pt] (1,0) to (1,.25);
			\draw[line width=0.5pt] (0,.25) arc(180:0:0.5);
			\draw[line width=0.5pt] (0,2) to (0,1.75);
			\draw[line width=0.5pt] (1,2) to (1,1.75);
			\draw[line width=0.5pt] (0,1.75) arc(180:360:0.5);
		}%
	}%
}

\newcommand{\xx}{%
	{%
		\tikz[scale=0.2]{
			\draw[line width=0.5pt] (0,0) to[out=90,in=-90] (1,1) to[out=90,in=-90] (0,2);
			\draw[line width=0.5pt] (1,0) to[out=90,in=-90] (0,1) to[out=90,in=-90] (1,2);
		}%
	}%
}

\newcommand{\llsub}{%
	\mathbin{%
		\tikz[baseline=-0.6ex, scale=0.15]{
			\draw[line width=0.5pt] (0,0) to (0,2);
			\draw[line width=0.5pt] (1,0) to (1,2);
		}%
	}%
}

\newcommand{\ccsub}{%
	\mathbin{%
		\tikz[baseline=-0.6ex, scale=0.15]{
			\draw[line width=0.5pt] (0,0) to (0,.25);
			\draw[line width=0.5pt] (1,0) to (1,.25);
			\draw[line width=0.5pt] (0,.25) arc(180:0:0.5);
			\draw[line width=0.5pt] (0,2) to (0,1.75);
			\draw[line width=0.5pt] (1,2) to (1,1.75);
			\draw[line width=0.5pt] (0,1.75) arc(180:360:0.5);
		}%
	}%
}

\newcommand{\xxsub}{%
	\mathbin{%
		\tikz[baseline=-0.6ex, scale=0.15]{
			\draw[line width=0.5pt] (0,0) to[out=90,in=-90] (1,1) to[out=90,in=-90] (0,2);
			\draw[line width=0.5pt] (1,0) to[out=90,in=-90] (0,1) to[out=90,in=-90] (1,2);
		}%
	}%
}

\begin{document}
	
	\title[Detecting Causality with the Links--Gould Polynomial]{Detecting Causality with the Links--Gould Polynomial}
	
	\author[V. Chernov]{Vladimir Chernov}
	\address{Department of Mathematics, Dartmouth College, Hanover, NH
		03755, USA}
	\email{Vladimir.Chernov@dartmouth.edu}

	\author[M. Harper]{Matthew Harper}
	\address{Department of Mathematics, Michigan State University, East Lansing, Michigan, USA}
	\email{mrhmath@proton.me}
	
	\author[B.-M. Kohli]{Ben-Michael Kohli}
	\address{Section de Math\'ematiques, Universit\'e de Gen\`eve \\
		rue du Conseil-G\'en\'eral 7-9, 1205 Gen\`eve, Switzerland}
	\email{bm.kohli@protonmail.ch}

	\thanks{    
		{\em Key words and phrases:}
		Links--Gould invariant, Alexander polynomial, causality in spacetimes, globally hyperbolic spacetimes, Low Conjecture, Seifert genus.
        \\
        \indent {\em MSC:} primary 57K14, 83C75; secondary 57K16, 17B37}
	
	\date{\today }
	
	\begin{abstract}
		The conjectures of Low~\cite{Low0} and Natario--Tod~\cite{NatarioTod}, and Penrose's question on Arnold's Problem list~\cite{ArnoldProblem, ArnoldProblemBook} ask if causality in spacetimes can be formulated in terms of linking of spheres of light rays in the manifold of all light rays. For $(2+1)$-dimensional spacetimes, this link happens in the manifold coverable by a solid torus $S^1\times \mathbb R^2$. This was solved positively by Chernov and Nemirovski~\cite{CNGAFA, CNGT}, see also~\cite{CHigherDimensions}, which raises the question of which link invariants can be used to study causality. 
		Chernov, Martin and Petkova~\cite{CMP} proved that Heegaard--Floer and Khovanov homology completely capture causality. 
		
		Allen--Swenberg~\cite{AS} conjectured that the Jones polynomial, which is obtained as an alternating Euler characteristic from Khovanov homology, is also sufficient. But they constructed complicated examples of links $\AS(n)_{n=1}^{\infty}$ that suggest that the Alexander--Conway polynomial -- which is the Euler characteristic of Heegaard--Floer homology -- is not enough.
		
		The Links--Gould polynomial is a quantum invariant that specializes to the classical Alexander--Conway polynomial in two different ways~\cite{Ishii,Kohli,KPM} and somewhat surprisingly inherits some of its characteristic classical features, see~\cite{Kohli-Tahar, LNVdV25, HKST}. We show that it distinguishes all the Allen-Swenberg links from the link of causally unrelated events and hence detects causality in all known examples where the Alexander--Conway polynomial is not sufficient. This suggests that it may completely capture causality. The work on the categorification of the Links--Gould Polynomial is an ongoing and hard problem, and it is not a subject of this paper.
		
		As a corollary, we also compute the Seifert genus of all Allen--Swenberg links.
		
	\end{abstract}
	
	\maketitle
	
	{\footnotesize
		\tableofcontents
	}
	
	
	\section{Introduction}

	\subsection{Causality and Linking}
	A {\it spacetime\/} $X$ is a time-oriented Lorentz manifold with signature $(+, \cdots, +, -)$. The spacetime is \emph{globally hyperbolic} if it has a Cauchy surface, i.e.~a subset $\Sigma$ such that every maximal causal curve $\gamma$ -- a curve with $\gamma'(t)\cdot \gamma'(t)\leq 0$ -- intersects it exactly once, see Hawking and Ellis~\cite[pp.~211-212]{HE}. Such curves represent particles moving not faster than light.
	The Cauchy surface can be taken to be smooth and spacelike, i.e.~such that the restriction of the Lorentz metric to it is Riemann; and this is where one gives the initial conditions for the Einstein equations. A globally hyperbolic spacetime is diffeomorphic to $\Sigma \times \mathbb R$~\cite{BeSa3, BeSa2, BeSa1} which strengthens the similar and much earlier homeomorphism result of Geroch~\cite{Geroch}.
	
	{\it Globally hyperbolic spacetimes\/} are the most important and studied class of spacetimes. One of the versions of the Strong Cosmic Censorship Conjecture of Penrose~\cite{Penrose} is that all physically relevant spacetimes are like this, that is, if you delete the black holes along their horizons. Nothing can escape black holes, so what happens inside is not relevant to us who live outside of them.
	
	An alternative and equivalent definition of a globally hyperbolic spacetime is that it does not have closed causal curves (i.e. there is no time travel) and that it does not have naked singularities (i.e.~the intersection of the causal future and causal past of any two points is compact and there are no particles that disappear from the view point of the observer), see Bernal--Sanchez~\cite{BeSaCausal},

	Two points (events) $x,y\in X$ are \emph{causally related} if there is a curve $\gamma$ connecting them with $\gamma'(t)\cdot \gamma'(t)\leq 0$, i.e.~if one can get from one point to the other moving not faster than the light speed. Such curves are called {\it causal curves.\/}
	As was shown by Low~\cite{Low0}, the space $\mathcal N_X$ of future-directed unparameterized light rays in $X$ can be naturally identified with the total space $ST^*\Sigma$ of the spherical cotangent bundle of any spacelike Cauchy surface $\Sigma$. The set of light rays through a point $x$ is then identified with the sphere $S_x\subset N_X$ called the \emph{sky of $x$}. Thus, the space $\mathcal N_X$ is a natural contact manifold and $S_x$ is a Legendrian sphere, see~\cite{LowLegendrian} and~\cite{NatarioTod}.
	
	\begin{remark} 
		Two events $x,y$ are {\it chronologically related} if there is a  curve $\gamma(t)$ between them such that the velocity vector along it is everywhere timelike i.e. $\gamma'(t)\cdot \gamma'(t)<0.$ Such curves are called {\it timelike.\/} It is well known that if two events are causally but not chronologically related, then the two events belong to the common light ray between $x$ and $y$. Thus, the sky link $(S_x, S_y)$ is singular, and the two skies have a double point. Such links are called nontrivial by definition.
	\end{remark}
	
	Assuming that the Cauchy surface $\Sigma$ is not homeomorphic  to $\mathbb R P^2$ or $S^2$, Nemirovski and Chernov proved the Low Conjecture~\cite{Low0, Low1, Low3, LowLegendrian}, which states that  events $x,y$ in a $(2+1)$-dimensional globally hyperbolic spacetime $X$ (with $\Sigma \neq S^2, \, \mathbb R P^2$) are causally related if and only if their skies $S_x, S_y$ are linked in $\mathcal N_X$, see \cite{CNGAFA}. This statement is false for $\Sigma=S^2, \, \mathbb R P^2$~\cite{CRCMP}, and in general when a higher dimensional $\Sigma$ admits a structure of a $Y_x^{\ell}$ manifold, see~\cite{Besse}. This also answered Penrose's question from the Arnold problem list~\cite{ArnoldProblemBook}.
	Here \emph{linked} means that  the pair is {\bf not} isotopic to a pair of fibers of the $S^1$-bundle $ST^*\Sigma\to \Sigma$ or that $S_x, S_y$ intersect, meaning that $x,y$ are on a common light ray. We can show that this does not depend on the choice of a Cauchy surface $\Sigma$ and therefore on the identification $\mathcal N_X=ST^*\Sigma.$
	
	\begin{remark}
		Similar conjectures and statements were proved in dimensions $(3+1)$ and above using Legendrian linking~\cite{CNGAFA, CNGT, CHigherDimensions}, thus answering the Legendrian Low Conjecture of Natatrio and Tod~\cite{NatarioTod}. Note also that for all dimensions if one considers the Legendrian link $(S_x, S_y)$ rather than the topological link and the events $x,y$ to be causally related, then one can tell which of the two events is in the future of the other. This is because for the future event there is a so called nonnegative Legendrian isotopy to the one in the past, but not the other way round, see~\cite{CNGAFA, CNGT}. Telling the past event from the future event if one looks at the topological link of skies is not possible.
	\end{remark}

	For the case where the Cauchy surface of $X$ is homeomorphic to $\mathbb R^2$, these skies are unlinked parallel circles in the solid torus $\mathcal N_X=S^1\times \mathbb R^2$, each isotopic to the longitude of the solid torus. The solid torus can be represented as $\mathbb R^3$ minus a trivial knot, so the two component link of two parallel longitudes in the solid torus corresponds to a three component link in $\mathbb R^3$ that is a connected sum of two Hopf links.
	
	Thus, the question of studying causality in $(2+1)$-dimensional globally hyperbolic spacetimes with Cauchy surface $\Sigma\neq S^2, \, \mathbb R P^2$ is now a question about finding link invariants that distinguish $3$-component links from the connected sum of two Hopf links $H\#H$.
	
	Chernov, Martin and Petkova~\cite{CMP} showed that Heegaard--Floer~\cite{HF1, HF2} and Khovanov homology theories~\cite{Khovanov} completely capture causality in such spacetimes. This raised the question of whether their Euler characteristics, the Alexander--Conway and Jones polynomials, capture causality as well. This was thoroughly studied by Allen and Swenberg~\cite{AS} who conjectured that the Jones polynomial completely captures causality. They also found a sequence of 3-component links $\AS(n)_{n=1}^{\infty}$ that look like a link of two skies but are not distinguishable  by the Alexander--Conway polynomial from $H\#H$. See Figure~\ref{fig:FIGAS} for a representation of the first Allen--Swenberg link. Note that there are no known examples of globally hyperbolic spacetimes where $\AS(n)$ links indeed arise as links corresponding to skies, and there are no other known examples where the Alexander--Conway polynomial does not capture causality.

    \begin{figure}[h!]
		\begin{center}
			\adjustbox{trim=0cm 0cm 0cm 0cm}{
				\includegraphics[scale=.45]{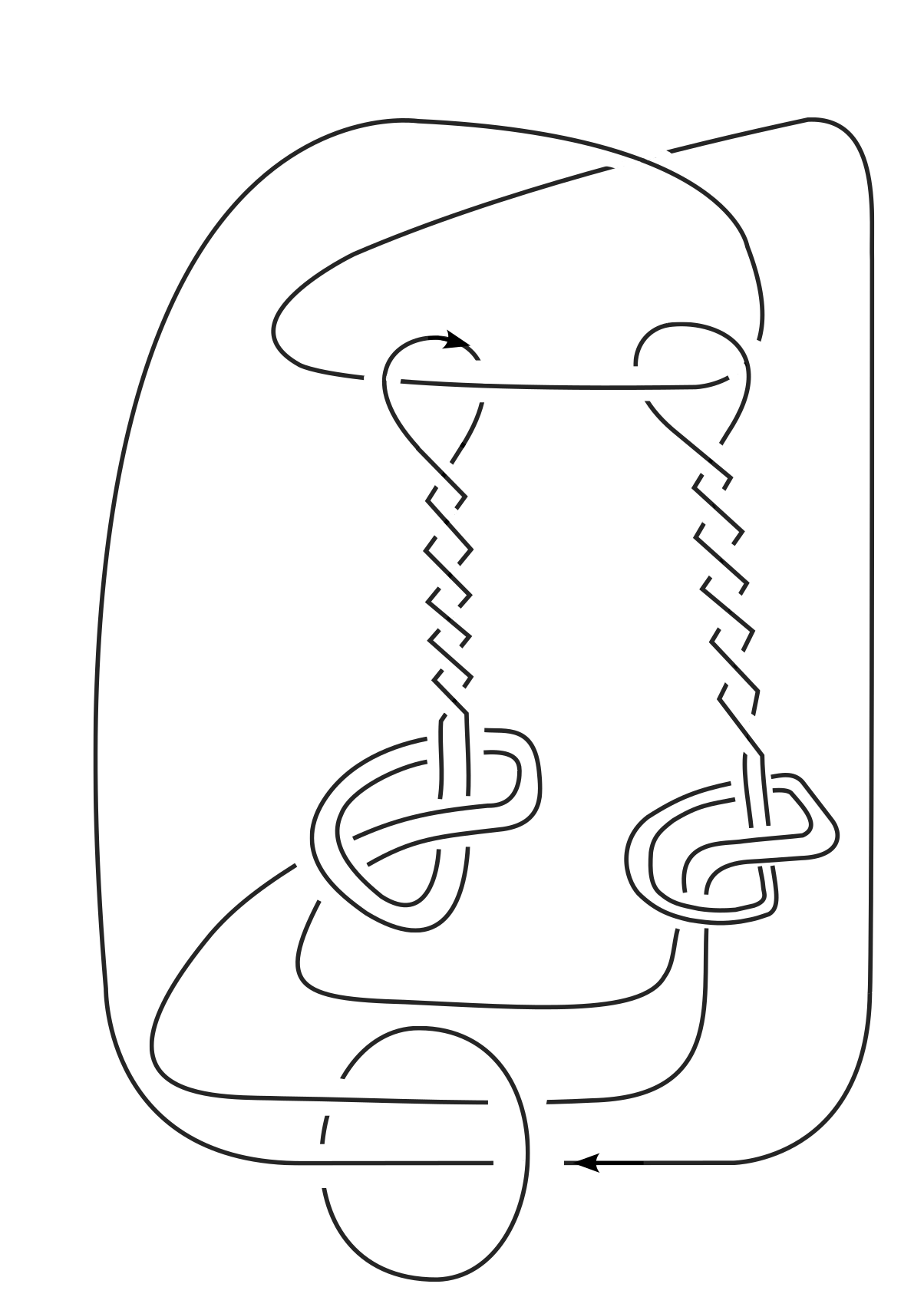}
			}
			\caption{The first Allen--Swenberg link $\AS(1)$.} \label{fig:FIGAS}
		\end{center}
	\end{figure} 
	
	In this work we study the Links--Gould polynomial $\LG$, a two-variable generalization of the Alexander--Conway polynomial. We show that $\LG$ distinguishes all Allen--Swenberg links from the causally unrelated situation $H\#H.$ Thus, $\LG$ completely captures causality in all known examples of $2+1$-dimensional globally hyperbolic spacetimes.

	\begin{theorem}
		$\LG$ distinguishes all $\AS(n)$ links from $H\#H$ thus completely capturing causality in all known examples of globally hyperbolic spacetimes with Cauchy Surface~$\Sigma\neq S^2, \, \mathbb R P^2$.
	\end{theorem}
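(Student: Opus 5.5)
Since $\AS(n)$ is an infinite family, we cannot simply compute $\LG$ link by link. The plan is to exploit the uniform structure of the Allen--Swenberg construction. Each $\AS(n)$ is obtained from a fixed tangle diagram by inserting $n$ copies of a fixed braid-like piece, namely the repeated twisting region visible in Figure~\ref{fig:FIGAS}. We will compute $\LG$ through its $R$-matrix (state-sum) model coming from the four-dimensional representation $V$ of $\Uq(\mathfrak{gl}(2|1))$. In this model the repeated piece becomes a fixed endomorphism $T$ of $V^{\otimes k}$ (or of the relevant intertwiner space), and $\AS(n)$ is a closure of $\mathrm{Cap}\circ T^n \circ \mathrm{Cup}$ for fixed tensors $\mathrm{Cup}$ and $\mathrm{Cap}$ built from the remaining part of the diagram. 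Consequently $\LG(\AS(n);t_0,t_1)$ is a matrix coefficient of $T^n$.

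\textbf{Step 1.} Fix a diagram of $\AS(n)$ as such a closure. The Links--Gould invariant is defined for links via the $(1,1)$-tangle trick (cutting one component), since the quantum trace vanishes for this superalgebra. We therefore cut open a component that lies outside the twist region.

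\textbf{Step 2.} Reduce $T$ to a small matrix. Decompose $V^{\otimes 2}$ (and $V^{\otimes k}$ as needed) into highest-weight summands; $V^{\otimes2}$ has three of them, and the braiding acts on each by explicit scalars or small blocks. This turns the computation of $T$ into a finite linear-algebra problem over $\BZ[t_0^{\pm1},t_1^{\pm1}]$.

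\textbf{Step 3.} Obtain a closed form. Diagonalize $T$ on the relevant invariant subspace, or equivalently find the linear recurrence satisfied by $a_n=\LG(\AS(n))$ from its characteristic polynomial. This gives $a_n=\sum_i c_i\lambda_i^n$ with explicit Laurent-polynomial eigenvalues $\lambda_i$ and coefficients $c_i$. Compute $\LG(H\#H)$ separately, using multiplicativity under connected sum and the value of $\LG$ on the Hopf link. Then show $a_n\neq \LG(H\#H)$ for every $n\ge1$.

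The simplest route to this inequality is to specialize, say to $t_1=-t_0^{-1}$ or to $t_0=t_1^{-1}$. These specializations recover the Alexander--Conway polynomial (\cite{Ishii,Kohli,KPM}) and so agree for both links; the plan is instead to pick a specialization off these loci. The cleanest test is a degree or extreme-coefficient argument: show that the span in $t_0$ (or the top coefficient) of $a_n$ grows linearly in $n$ because one eigenvalue dominates, whereas $\LG(H\#H)$ has bounded span. The small values of $n$ would then be checked directly. This growing span is also what one would use for the Seifert genus corollary, via the known genus bound from the $\LG$ degree \cite{Kohli-Tahar}.

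\textbf{Main obstacle.} Steps 2 and 3 are where the difficulty lies. $\AS(n)$ has many strands in the repeated region, so $T$ may act on a large space, and the cancellations that make the Alexander polynomial blind must be shown not to occur for $\LG$. If the dominant eigenvalue happens to have a coefficient $c_i$ vanishing identically, the degree argument fails. In that case we would fall back on comparing $a_n$ and $\LG(H\#H)$ at a single numerical point, such as a root of unity, where the recurrence can be analyzed explicitly.
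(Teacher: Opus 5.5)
Your outline has the same skeleton as the paper's argument. You realize $\LG_{\AS(n)}$ as a fixed closure functional applied to the $n$-th iterate of a fixed linear map on a small space of intertwiners, and then show the $s$-span grows linearly in $n$. The endpoint is also the same. The paper finds $\mathrm{span}_s\LG_{\AS(n)}=4(4n+2)\ge 24$ for every $n\ge 1$. By contrast, $H\#H$ bounds a pair of pants, so the genus bound gives $\mathrm{span}_s\LG_{H\#H}\le 8$, and no separate check of small $n$ is needed.

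The differences are in Steps 2 and 3. In Step 2 the relevant space is not a highest-weight decomposition of $V^{\otimes 2}$ with scalar braiding eigenvalues. The repeated piece $TT^+\boxtimes TT^-$ is a $4$-ended tangle with antiparallel ends, and copies are joined by horizontal concatenation $\boxtimes$, not by stacking. So the state space is $\End_{\Uq}(V(0,\a)\otimes V(0,\a)^*)$. This space is only $3$-dimensional, but it is not semisimple, since $V(0,\a)\otimes V(0,\a)^*\cong V(1,-1)\oplus P$ with $\End(P)\cong\BC[x]/(x^2)$. The paper uses the diagrammatic basis (identity, cup--cap, double crossing) and extracts components by three partial traces. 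It then needs only three finite inputs:
\begin{itemize}
\item the components of $TT^{\pm}$;
\item the structure constants of $\boxtimes$ on basis pairs;
\item $\AS^*$ evaluated on the basis.
\end{itemize}
This removes your ``main obstacle'' of a large state space.

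Step 3, however, would not go through as you state it. The paper deliberately avoids a closed form, and there is no single dominant eigenvalue. To leading order in $s$, the map $f\mapsto f\boxtimes(TT^+\boxtimes TT^-)$ multiplies both the identity and the cup--cap components by the same factor $4s^8q^2$. It also has an off-diagonal term $4s^6q^2$ feeding the cup--cap component into the identity component. This Jordan-type degeneracy is why the leading coefficient is $(n+1)4^nq^{2n}$ rather than of the form $c\lambda^n$.

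A spectral approach runs into three problems here:
\begin{itemize}
\item two eigenvalues agree to leading order;
\item the coefficients $c_i$ can carry denominators like $\lambda_1-\lambda_2$, so the degree of $\sum_i c_i\lambda_i^n$ cannot be read off from a single term;
\item the $\lambda_i$ need not be Laurent polynomials at all, since the characteristic polynomial need not split.
\end{itemize}
The workable form of your degree argument is the paper's. Induct directly on leading terms of the recursion, first in $s$ and then in $q$. Check that the double-crossing component enters only in lower $q$-degree and that the surviving top coefficients do not cancel. This gives the leading term $s^{4+8n}q^{2n}(n+1)4^n$. The symmetry $s\mapsto q^{-1}s^{-1}$ then gives the trailing term $s^{-4-8n}q^{-4-6n}(n+1)4^n$.
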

	
	\begin{conjecture}
		We conjecture that $\LG$ completely captures causality in such spacetimes. This conjecture is in the spirit of the Allen--Swenberg conjecture for the Jones polynomial~\cite{AS}. Indeed $\LG$ does not have the deficiencies found by Allen and Swenberg for the Alexander--Conway polynomial, which is a specialization of $\LG$.
	\end{conjecture}
	
	\begin{remark}
		Note that the cover of Cauchy surfaces $\mathbb R^2$ induces a cover of the spherical cotangent bundles $ST^*\mathbb R^2\to ST^*\Sigma^2$. Under this coverage, the trivial link consisting of two fibers over two distinct points of $\mathbb R^2$ goes to a similar link consisting of two fibers over two distinct points of $\Sigma$. Thus links corresponding to the skies of two causally unrelated events go to the links corresponding to the skies of two causally unrelated events. This means that in the case where the Cauchy surface $\Sigma\neq S^2, \, \mathbb RP^2$ completely captures causality, it is enough to look at the $3$-component links in $\mathbb R^3$ as we do here, and as was done in~\cite{CMP}.
	\end{remark}
	
	\begin{remark} 
		For spacetimes of dimensions $3+1$ and above there currently are no known invariants that can completely capture causality and the only easily computable invariant is the affine linking number of Chernov--Rudyak~\cite{CRCMP} which is a topological -- rather than Legendrian -- causality detection invariant.
	\end{remark}

	It would be interesting to categorify the Links–Gould polynomial and explore whether it can fully capture causality, in the spirit of~\cite{CMP}. However, this would require substantially new ideas and present many technical challenges.
    
	\begin{remark}
	Some interesting research experiences for high school students have been carried out through the Horizon/Sunrise program under the supervision of Chernov, Zappala, and Maguire. See the works relating quandles and causality by Leventhal~\cite{Leventhal} for affine Alexander quandles, by Chen~\cite{Chen} for medial and affine quandles (in particular correcting computational errors in Leventhal's work for the affine quandle case), Jain~\cite{Jain} and Baxshilloyev~\cite{Bax} for symplectic quandles, and Fan~\cite{Fan} for some non-affine Alexander quandles. 
    
    Note that quandle invariants are computationally hard objects and not as efficient as the Jones and Links--Gould polynomials. 
Also there is no known relationship between values of Alexander--Conway polynomial and values of quandle coloring invariants. Alexander--Conway polynomial is a specification of Links--Gould polynomial and hence there is an obvious hope supported by our results that Links--Gould can solve all the causality related questions that Alexander--Conway can not solve.
	\end{remark}
	
	\subsection{Organization of the paper} In the following Section~\ref{sec.LG} we define $\LG$ and introduce its main properties. Then we give an algorithm to compute $\LG$ on Allen--Swenberg links, which we formally state as Theorem \ref{thm.algorithm}. We use the theorem to determine the leading and trailing terms of $\LG_{\AS(n)}$ for all $n$ in Corollary \ref{cor.leading}. This proves that $\LG$ distinguishes all Allen-Swenberg links from the link of causally unrelated events, and we also determine the $3$-genus of these links.

	\subsection*{Acknowledgements}
	BMK would like to thank Dartmouth College for its hospitality during his stay in Hanover in February 2026, when this project was initiated. MH was partially supported through the NSF-RTG grant
	\#DMS-2135960. The work of BMK is partially supported by the SNSF research program NCCR The Mathematics of Physics (SwissMAP), and the SNSF grant no. 200021-232258. This work was supported in part through computational resources and services provided by the Institute for Cyber-Enabled Research at Michigan State University.
	
	\section{Computing the Links--Gould polynomial of an Allen--Swenberg link}\label{sec.LG}
	
	\subsection{The Links--Gould polynomial: definition and useful properties}
	
	The Links--Gould polynomial $\LG$ is derived from the 4-dimensional irreducible representation of the unrolled quantum group of $\mathfrak{sl}(2|1)$ with highest weight $(0,\a)$ where $\a\in\BC$. We denote the quantum group by $\Uq$ and this 4-dimensional representation by $V(0,\a)$. We will assume that $\alpha$ is generic in the sense that $\alpha(-1-\alpha)\neq 0$ and that $q\in\BC$ is not a root of unity. With this assumption on $\a$, the representation $V(0,\alpha)$ is irreducible. See~\cite{GHKW} for our conventions for the quantum group and preferred basis of these representations. We will write $s=q^\a$ and $\br{x}=q^x-q^{-x}$.

    We also recall that $\LG$ is computed from the modified trace construction of \cite{GPT, GeerKujawaPatureau}. Because the quantum dimension of $V(0,\a)$ is zero, the invariant of a link colored by this representation is computed by cutting an arbitrary component and applying the Reshetikhin--Turaev functor (RT) to the resulting 1-tangle. Irreducibility of $V(0,\a)$ will imply that the image of such a 1-tangle is a scalar matrix, and the Links--Gould polynomial is that associated scalar. Since the components of links considered in the present article are all colored by $V(0,\a)$ it is not necessary to consider the modified dimension function for the category of $\Uq$ weight modules in full generality. Instead, we may assume that the modified dimension function is normalized to 1. 
    
	Of particular interest to us in the present paper is the decomposition of the tensor product between $V(0,\a)$ and its dual $V(0,\a)^\ast$ into a direct sum
	\begin{align}
		V(0,\a)\otimes V(0,\a)^\ast \cong V(1,-1)\oplus P
	\end{align}
	where $P$ is an indecomposable projective module and $V(1,-1)$ is an irreducible representation with highest weight $(1,-1)$. More precisely, $P$  is the projective cover of the trivial representation, the tensor unit, $\mathbbm{1}$. We have shown in~\cite{GHKW} that $\End_{\Uq}(P)\cong \BC[x]/(x^2)$ and therefore $\End_{\Uq}(V(0,\a)\otimes V(0,\a)^\ast)$ is 3-dimensional. 
    
    Using the Reshetikhin--Turaev functor, tangles whose upper and lower boundaries are $\uparrow\downarrow$ give a diagrammatic presentation of vectors in the above endomorphism space. The tangles
	\begin{align}\label{eqn.basis}
		\begin{tikzpicture}[thick,scale=0.6]
			\draw[->] (0,0) to (0,2);
			\draw[<-] (1,0) to (1,2);
		\end{tikzpicture}
		&&
		\begin{tikzpicture}[thick,scale=0.6]
			\draw (0,0) to (0,.25);
			\draw[<-] (1,0) to (1,.25);
			\draw (0,.25) arc(180:0:0.5);
			\draw[<-] (0,2) to (0,1.75);
			\draw (1,2) to (1,1.75);
			\draw (0,1.75) arc(180:360:0.5);
		\end{tikzpicture}
		&&
		\begin{tikzpicture}[thick,scale=0.6]
			\draw (0,0) to[out=90,in=-90] (1,1);
			\draw[over, <-] (1,0) to[out=90,in=-90] (0,1);
			\draw (0,1) to[out=90,in=-90] (1,2);
			\draw[over,->] (1,1) to[out=90,in=-90] (0,2);
		\end{tikzpicture}
	\end{align}
	are such vectors. Throughout this article, we will identify tangles with their image under the RT functor. For simplicity, we may use the notation $\ll$\,, $\cc$\,, and $\xx$ to express these respective vectors. 
	
	We fix a basis of weight vectors $x_0,\dots, x_3$ for $V(0,\alpha)$, where $x_0$ is of highest weight $(0,\alpha)$ and 
	\begin{align}
		F_2x_0=x_1, && F_1x_1=x_2 && F_2x_2=x_3\,.
	\end{align} 
	See \cite{GHKW} for further details. We also set $y_0,\dots,  y_3$ to be the dual basis for $V(0,\alpha)^*$.	To express the tangles in \eqref{eqn.basis} as explicit morphisms on $V(0,\alpha)\otimes V(0,\alpha)^*$, we use a compressed matrix notation. In this notation, the $(i,j)$-matrix entry specifies the image of the vector $x_i\otimes y_j$. As a shorthand, we will suppress the tensor product symbol from the output notation. The matrix for the action of $\ll$~, the identity on $V(0,\a)\otimes V(0,\a)^*$, has $x_iy_j$ in position $(i,j)$. The other two tangles expressed as matrices in this basis are given as follows.
		\begin{align}
			\cc &= \scalebox{.8}{$\dfrac{x_{0} y_{0} + x_{1} y_{1} + x_{2} y_{2} + x_{3} y_{3}}{s^{2}} \left[\begin{matrix} 1 & 0 & 0 & 0\\0 & - 1 & 0 & 0\\0 & 0 & - q^{-2} & 0\\0 & 0 & 0 & q^{-2}\end{matrix}\right] $}\, ,
		\end{align}
	and $\xx$ is the matrix with entries:
	\begin{tiny}
		\begin{align*}
			\xx_{11} &= 3 x_{0} y_{0} + 2 x_{1} y_{1} + 2 x_{2} y_{2} + x_{3} y_{3}
			- \frac{2 x_{0} y_{0} + 2 x_{1} y_{1} + 2 x_{2} y_{2} + x_{3} y_{3}
				+ \frac{2 x_{0} y_{0}}{q^{2}} + \frac{x_{1} y_{1}}{q^{2}} + \frac{x_{2} y_{2}}{q^{2}} + \frac{x_{3} y_{3}}{q^{2}}}{s^{2}} \\
			&\quad + \frac{\frac{x_{0} y_{0}}{q^{2}} + \frac{x_{1} y_{1}}{q^{2}} + \frac{x_{2} y_{2}}{q^{2}} + \frac{x_{3} y_{3}}{q^{2}}}{s^{4}}
			+ \frac{x_{0} y_{0}}{q^{2}} \, ,
			\\
			\xx_{12} &= \frac{q^{2} s^{2} x_{0} y_{1} + q^{2} s^{2} x_{2} y_{3} + s^{2} x_{0} y_{1} - x_{0} y_{1} - x_{2} y_{3}}{q^{2} s^{2}} \, ,\\
			\xx_{13} &= \frac{q^{3} s^{2} x_{0} y_{2} - q^{2} s^{2} x_{1} y_{3} + q s^{2} x_{0} y_{2} - q x_{0} y_{2} + x_{1} y_{3}}{q^{3} s^{2}} \, ,\\
			\xx_{14} &= x_{0} y_{3} \, ,\\
			\xx_{21} &= \frac{q^{2} s^{2} x_{1} y_{0} + q^{2} s^{2} x_{3} y_{2} - q^{2} x_{3} y_{2} + s^{2} x_{1} y_{0} - x_{1} y_{0}}{q^{2} s^{2}} \, ,\\
			\xx_{22} &= - \frac{2 q^{2} s^{4} x_{0} y_{0} + q^{2} s^{4} x_{1} y_{1} + q^{2} s^{4} x_{2} y_{2}
				- 2 q^{2} s^{2} x_{0} y_{0} - 2 q^{2} s^{2} x_{1} y_{1} - 2 q^{2} s^{2} x_{2} y_{2}
				- q^{2} s^{2} x_{3} y_{3}}{q^{2} s^{4}}  \\
			&\quad {} - \frac{- s^{4} x_{1} y_{1} - s^{2} x_{0} y_{0}
				+ x_{0} y_{0} + x_{1} y_{1} + x_{2} y_{2} + x_{3} y_{3}}{q^{2} s^{4}} \, ,
			\\
			\xx_{23} &= \frac{x_{1} y_{2}}{q^{2}} \, ,\\
			\xx_{24} &= \frac{q s^{2} x_{0} y_{2} - q x_{0} y_{2} + x_{1} y_{3}}{q^{2} s^{2}} \, ,\\
			\xx_{31} &= \frac{q^{2} s^{2} x_{2} y_{0} - q s^{2} x_{3} y_{1} + q x_{3} y_{1} + s^{2} x_{2} y_{0} - x_{2} y_{0}}{q^{2} s^{2}} \, ,\\
			\xx_{32} &= \frac{x_{2} y_{1}}{q^{2}} \, ,\\
			\xx_{33}&=- \frac{2 q^{2} s^{4} x_{0} y_{0} + q^{2} s^{4} x_{1} y_{1} - 2 q^{2} s^{2} x_{0} y_{0} - 2 q^{2} s^{2} x_{1} y_{1} - 2 q^{2} s^{2} x_{2} y_{2} - q^{2} s^{2} x_{3} y_{3} - s^{2} x_{0} y_{0} + x_{0} y_{0} + x_{1} y_{1} + x_{2} y_{2} + x_{3} y_{3}}{q^{4} s^{4}} \, ,
			\\
			\xx_{34} &= - \frac{s^{2} x_{0} y_{1} - x_{0} y_{1} - x_{2} y_{3}}{q^{2} s^{2}} \, ,\\
			\xx_{41} &= x_{3} y_{0} \, ,\\
			\xx_{42} &= \frac{q^{2} s^{2} x_{2} y_{0} + q x_{3} y_{1} - x_{2} y_{0}}{q^{3} s^{2}} \, ,\\
			\xx_{43} &= - \frac{q^{2} s^{2} x_{1} y_{0} - q^{2} x_{3} y_{2} - x_{1} y_{0}}{q^{4} s^{2}} \, ,\\
			\xx_{44}&=\frac{q^{2} s^{4} x_{0} y_{0} - q^{2} s^{2} x_{0} y_{0} - q^{2} s^{2} x_{1} y_{1} - q^{2} s^{2} x_{2} y_{2} - s^{2} x_{0} y_{0} + x_{0} y_{0} + x_{1} y_{1} + x_{2} y_{2} + x_{3} y_{3}}{q^{4} s^{4}} \, .
		\end{align*}
	\end{tiny}
	Hence, it is clear that the three tangles are linearly independent and therefore form a basis of $\End_{\Uq}(V(0,\a)\otimes V(0,\a)^\ast)$. In other words, any tangle whose upper and lower boundaries are $\uparrow\downarrow$ may be expressed as a $\BC(q,s)$-linear combination of these three tangles. We denote this basis $\mathcal{B}$ with the vectors ordered according to \eqref{eqn.basis}. 
	
	\subsection{Components of endomorphisms}
	Recall our assumption that $V(0,\a)$ is irreducible. Fix $f\in \End_{\Uq}(V(0,\a)\otimes V(0,\a)^\ast)$. The goal of this section is to describe a method of determining the components $f_{\llsub}$, $f_{\ccsub}$, $f_{\xxsub}$ of $f$ in the basis $\mathcal{B}$. 
	
	First, we define several partial trace operations on $f$. The \emph{right trace} $\tr_R$, \emph{top trace} $\tr_T$, and \emph{(negatively) twisted right trace} $\widetilde{\tr}_R$ of $f$ are defined as follows.
	\begin{align}
		\tr_R(f):=
		\begin{tikzpicture}[thick,scale=0.5]
			\draw (0,0) rectangle (2,1) node[pos=.5] {$f$};
			\def\offset{0.2}
			\draw (2-\offset,1) arc(180:0:.5);
			\draw (2-\offset,0) arc(180:360:.5);
			\draw[->] (0+\offset,1) to (0+\offset, 2);
			\draw[point=.5] (0+\offset,-1) to (0+\offset, 0);
			\draw[->] (3-\offset, 0) to (3-\offset, 1);
		\end{tikzpicture}
		&&
		\tr_T(f):=
		\begin{tikzpicture}[thick,scale=0.5]
			\draw (0,0) rectangle (2,1) node[pos=.5] {$f$};
			\def\offset{0.2}
			\draw[point=0.5] (0+\offset,1) arc(180:0:1-\offset);
			\draw (2-\offset,0) arc(180:360:.5);
			\draw[point=.5] (0+\offset,-1) to (0+\offset, 0);
			\draw[->] (3-\offset, 0) to (3-\offset, 2);
		\end{tikzpicture}
		&&
		\widetilde{\tr}_R(f):=\tr_R(
		\begin{tikzpicture}[thick,scale=.45]
			\draw[<-] (1,0) to[out=90,in=-90] (0,1);
			\draw[oversmall] (0,0) to[out=90,in=-90] (1,1);
			\draw[->] (1,1) to[out=90,in=-90] (0,2);
			\draw[oversmall] (0,1) to[out=90,in=-90] (1,2);
		\end{tikzpicture}\circ f)=
		\begin{tikzpicture}[thick,scale=0.5]
			\draw (0,0) rectangle (2,1) node[pos=.5] {$f$};
			\def\offset{0.2}
			\draw (2-\offset,1) to[out=90,in=-90] (\offset,2);
			\draw[oversmall] (\offset,1) to[out=90,in=-90] (2-\offset,2);
			\draw[->] (2-\offset,2) to[out=90,in=-90] (\offset,3) to (\offset,3.5);
			\draw[oversmall] (\offset,2) to[out=90,in=180] (2-\offset,3);
			\draw[point=.5] (2-\offset+1,0) to[out=90,in=0] (2-\offset,3) ;
			\draw (2-\offset,0) arc(180:360:.5);
			\draw[point=.5] (0+\offset,-1) to (0+\offset, 0);
		\end{tikzpicture}
	\end{align}
	For each partial trace, notice that the result is a morphism in $\End_{\Uq}(V(0,\a))$. Since we have assumed that $V(0,\a)$ is irreducible, Schur's lemma implies that $\End_{\Uq}(V(0,\a))$ is naturally identified with the ground field. For $g\in\End_{\Uq}(V(0,\a))$, we write $g=\langle g\rangle\cdot \id_{V(0,\a)}$. 
    \begin{lemma}
        For any $f\in \End_{\Uq}(V(0,\a))$, the components of $f$ in the basis $\mathcal{B}$ are determined by the linear system 
        \begin{align}\label{eq.linsys}
		\begin{bmatrix}
			0&1&\dfrac{\br{\a}\br{\a+1}}{q}
			\\
			1&0&1
			\\
			q\br{\a}\br{\a+1}&1&0
		\end{bmatrix}\cdot 
		\begin{bmatrix}
			f_{\llsub}\\f_{\ccsub}\\f_{\xxsub}
		\end{bmatrix}=\begin{bmatrix}
			\la \tr_R(f)\ra\\\la \tr_T(f)\ra\\
			\left\la \widetilde{\tr}_R(f)\right\ra
		\end{bmatrix}\,.
	\end{align}
    \end{lemma}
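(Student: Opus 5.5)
The plan is to use linearity. (The statement says $f\in\End_{\Uq}(V(0,\a))$; I read this as the evident typo for $f\in\End_{\Uq}(V(0,\a)\otimes V(0,\a)^\ast)$.) Each of the three maps $f\mapsto\la\tr_R(f)\ra$, $f\mapsto\la\tr_T(f)\ra$ and $f\mapsto\la\widetilde{\tr}_R(f)\ra$ is $\BC(q,s)$-linear, since partial traces and composition with a fixed tangle are linear. Writing
\[
f=f_{\llsub}\,\ll+f_{\ccsub}\,\cc+f_{\xxsub}\,\xx ,
\]
the system \eqref{eq.linsys} just says that the $(i,j)$ entry of the $3\times 3$ matrix is the $i$-th trace functional evaluated on the $j$-th element of $\mathcal{B}$. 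So the proof reduces to nine diagrammatic evaluations, followed by a check that the matrix is invertible, so that the components are indeed determined.

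Most entries follow from planar isotopy and the fact that the quantum dimension of $V(0,\a)$ is zero, i.e.\ an unknotted circle evaluates to $0$.
\begin{itemize}
\item $\tr_R(\ll)$ closes the right strand into a circle, so it is $0$.
\item $\tr_R(\cc)$ straightens by a zigzag to $\id_{V(0,\a)}$, giving $1$.
\item $\tr_T(\ll)$ is likewise a zigzag, so it equals $1$.
\item $\tr_T(\cc)$ contains a free circle, so it is $0$.
\item $\tr_T(\xx)$: closing the top of the full twist by a cap undoes the two crossings by a Reidemeister II move, giving the identity, hence $1$.
\item $\widetilde{\tr}_R(\cc)$ inserts the inverse twist on a cup–cap. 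It reduces by Reidemeister II/I-type moves to a zigzag, up to a framing (ribbon) factor. With the negatively twisted convention this factor should cancel, giving $1$.
\end{itemize}

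The nontrivial entries are $\tr_R(\xx)$, $\widetilde{\tr}_R(\ll)$ and $\widetilde{\tr}_R(\xx)$, with predicted values $\br{\a}\br{\a+1}/q$, $q\br{\a}\br{\a+1}$ and $0$ respectively. Two of these reduce to each other by isotopy:
\begin{itemize}
\item $\tr_R(\xx)$ is the closure of one strand of a full twist. This is a 1-tangle of a Hopf link, i.e.\ the $S$-type invariant of the Hopf link colored by $V(0,\a)$ on both components.
\item $\widetilde{\tr}_R(\ll)$ is the same Hopf 1-tangle with the opposite crossings, or mirror image.
\item $\widetilde{\tr}_R(\xx)$: composing the inverse full twist with the full twist gives $\ll$ by Reidemeister II, so this is $\tr_R(\ll)=0$.
\end{itemize}

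It remains to compute the open Hopf link invariant, which is the main obstacle. I would do this directly from the explicit matrix of $\xx$ given above. Take the partial right trace of $\xx$ using the standard evaluation and coevaluation maps for $V(0,\a)^\ast$, including the pivotal element $K_{2\rho}$, whose eigenvalues on $x_i$ produce the powers of $q$ and $s$. Then read off the coefficient of $x_0$. Concretely, $\la\tr_R(\xx)\ra$ is a weighted sum $\sum_i w_i\cdot(\text{coefficient of }x_iy_i \text{ in } \xx_{1,i+1})$ projected to $x_0$, with the $w_i$ read off from $\cc$. The normalization of $\cc$ already encodes these weights: its diagonal is $1,-1,-q^{-2},q^{-2}$ over $s^{2}$, and it simplifies to $\br{\a}\br{\a+1}/q$. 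This agrees with the known $\mathfrak{sl}(2|1)$ open Hopf link value $(s-s^{-1})(qs-q^{-1}s^{-1})$ up to the $q$-power fixed by the framing.

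The mirror entry $\widetilde{\tr}_R(\ll)$ follows similarly, either by the symmetry $q\mapsto q^{-1}$, $s\mapsto s^{-1}$ (up to a sign or conjugation, which I would check), or by inverting $\xx$ in the basis $\mathcal{B}$ and repeating the trace computation. The care needed is in tracking the signs and the $q$-shift coming from the super-pivotal structure.

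Finally, the determinant of the matrix is $2\br{\a}^2\br{\a+1}^2\neq0$ for generic $\a$, so the system determines the components uniquely.
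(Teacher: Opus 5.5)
Your overall route matches the paper's: the three functionals are linear, so the lemma reduces to evaluating them on $\ll$, $\cc$, $\xx$, and the paper's proof simply tabulates these nine values. Your reading of the typo is right. So are your arguments for $\tr_R(\ll)$, $\tr_R(\cc)$, $\tr_T(\ll)$, $\tr_T(\cc)$ and $\widetilde{\tr}_R(\xx)=\tr_R(\xx^{-1}\circ\xx)=0$.

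\textbf{The gap.} The problem is in $\la\tr_T(\xx)\ra=1$ and $\la\widetilde{\tr}_R(\cc)\ra=1$. In $\xx$ each strand passes over once and under once, so a cap on top is not a Reidemeister~II configuration. It creates two Reidemeister~I kinks of the same sign (writhe $\pm2$), and the same happens for $\xx^{-1}$ on the cup of $\cc$. In any ribbon category, $\widetilde{\mathrm{ev}}\circ\xx=\theta_V^{\pm2}\,\widetilde{\mathrm{ev}}$ and $\xx^{-1}\circ\mathrm{coev}=\theta_V^{\mp2}\,\mathrm{coev}$, where $\theta_V$ is the twist of $V=V(0,\a)$. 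No ``negative convention'' makes this factor cancel: in a normalization with $\theta_V^2\neq1$ both entries would equal $\theta_V^{\pm2}\neq1$. You also treat the two cases inconsistently (no factor in one, a cancelling factor in the other), although they are the same phenomenon.

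\textbf{The fix.} What you need is that $\theta_V^2=1$ in the normalization used here, and this can be read off from the displayed matrix. The images of $x_i\otimes y_i$ add up to
\[
\xx_{11}+\xx_{22}+\xx_{33}+\xx_{44}=x_0y_0+x_1y_1+x_2y_2+x_3y_3 ,
\]
so $\xx$ fixes the coevaluation vector spanning the image of $\cc$, i.e.\ $\xx\circ\cc=\cc$. Since $\End_{\Uq}(V(0,\a)\otimes V(0,\a)^\ast)\cong\BC\times\BC[x]/(x^2)$ is commutative, also $\cc\circ\xx=\cc$, so the cap absorbs $\xx$. Hence $\tr_T(\xx)=\tr_T(\ll)$ and $\widetilde{\tr}_R(\cc)=\tr_R(\xx^{-1}\circ\cc)=\tr_R(\cc)$, which gives both values.

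\textbf{Computational slips.} The determinant of the matrix is $(q+q^{-1})\br{\a}\br{\a+1}$, not $2\br{\a}^2\br{\a+1}^2$. It is still nonzero because $q$ is not a root of unity and $\a$ is generic, so your invertibility conclusion survives.

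In your explicit evaluation of $\tr_R(\xx)$, the weights need correcting. The closing cup carries the reciprocals $s^{2}(1,-1,-q^{2},q^{2})$ of the diagonal of $\cc$ (by the zigzag identity), not the diagonal itself. You also need the coefficient of $x_0y_j$ (not $x_jy_j$) in $\xx_{1,j+1}$. These coefficients are $3+q^{-2}-2s^{-2}-2q^{-2}s^{-2}+q^{-2}s^{-4}$ for $j=0$, then $1+q^{-2}-q^{-2}s^{-2}$ for $j=1,2$, and $1$ for $j=3$. The weighted sum is $s^2-1-q^{-2}+q^{-2}s^{-2}=\br{\a}\br{\a+1}/q$, as claimed. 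With the diagonal of $\cc$ itself as weights, it would come out wrong.

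Finally, $\widetilde{\tr}_R(\ll)=\tr_R(\xx^{-1})$ is the mirror image of $\tr_R(\xx)$, not isotopic to it. The substitution $q\mapsto q^{-1}$, $s\mapsto s^{-1}$ sends $\br{\a}\br{\a+1}/q$ to $q\br{\a}\br{\a+1}$ with no extra sign. So your mirror argument gives the stated value once you cite that symmetry of $\LG$.
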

    \begin{proof}
    Each trace of each basis tangle is given below. Recall that $s=q^\a$ and that we use the notation $\br{x}=q^x-q^{-x}$. 
	\begin{align}
		\left\langle \tr_R\left(
		\begin{tikzpicture}[thick,scale=0.45]
			\draw[->] (0,0) to (0,2);
			\draw[<-] (1,0) to (1,2);
		\end{tikzpicture}\right)\right\rangle &= 0,
		&
		\left\langle \tr_R\left(\begin{tikzpicture}[thick,scale=0.45]
			\draw (0,0) to (0,.25);
			\draw[<-] (1,0) to (1,.25);
			\draw (0,.25) arc(180:0:0.5);
			\draw[<-] (0,2) to (0,1.75);
			\draw (1,2) to (1,1.75);
			\draw (0,1.75) arc(180:360:0.5);
		\end{tikzpicture}\right)\right\rangle &= 1,
		&
		\left\langle \tr_R\left(\begin{tikzpicture}[thick,scale=.45]
			\draw (0,0) to[out=90,in=-90] (1,1);
			\draw[oversmall, <-] (1,0) to[out=90,in=-90] (0,1);
			\draw (0,1) to[out=90,in=-90] (1,2);
			\draw[oversmall,->] (1,1) to[out=90,in=-90] (0,2);
		\end{tikzpicture}\right)\right\rangle &
		= \frac{\br{\a}\br{\a+1}}{q}
		\\
		\left\langle \tr_T\left(\begin{tikzpicture}[thick,scale=0.45]
			\draw[->] (0,0) to (0,2);
			\draw[<-] (1,0) to (1,2);
		\end{tikzpicture}\right)\right\rangle &= 1
		&
		\left\langle \tr_T\left(\begin{tikzpicture}[thick,scale=0.45]
			\draw (0,0) to (0,.25);
			\draw[<-] (1,0) to (1,.25);
			\draw (0,.25) arc(180:0:0.5);
			\draw[<-] (0,2) to (0,1.75);
			\draw (1,2) to (1,1.75);
			\draw (0,1.75) arc(180:360:0.5);
		\end{tikzpicture}\right)\right\rangle &= 0,
		&
		\left\langle \tr_T\left(\begin{tikzpicture}[thick,scale=.45]
			\draw (0,0) to[out=90,in=-90] (1,1);
			\draw[oversmall, <-] (1,0) to[out=90,in=-90] (0,1);
			\draw (0,1) to[out=90,in=-90] (1,2);
			\draw[oversmall,->] (1,1) to[out=90,in=-90] (0,2);
		\end{tikzpicture}\right)\right\rangle &= 1
		\\
		\left\langle \widetilde{\tr}_R\left(
		\begin{tikzpicture}[thick,scale=0.45]
			\draw[->] (0,0) to (0,2);
			\draw[<-] (1,0) to (1,2);
		\end{tikzpicture}\right)\right\rangle &= q\br{\a}\br{\a+1},
		&
		\left\langle \widetilde{\tr}_R\left(
		\begin{tikzpicture}[thick,scale=0.45]
			\draw (0,0) to (0,.25);
			\draw[<-] (1,0) to (1,.25);
			\draw (0,.25) arc(180:0:0.5);
			\draw[<-] (0,2) to (0,1.75);
			\draw (1,2) to (1,1.75);
			\draw (0,1.75) arc(180:360:0.5);
		\end{tikzpicture}\right)\right\rangle &= 1,
		&
		\left\langle \widetilde{\tr}_R\left(
		\begin{tikzpicture}[thick,scale=.45]
			\draw (0,0) to[out=90,in=-90] (1,1);
			\draw[oversmall, <-] (1,0) to[out=90,in=-90] (0,1);
			\draw (0,1) to[out=90,in=-90] (1,2);
			\draw[oversmall,->] (1,1) to[out=90,in=-90] (0,2);
		\end{tikzpicture}\right)\right\rangle &= 0
	\end{align}    Applying these same trace operations to $f$, their linearity properties imply that its components are determined by the prescribed linear system.    
    \end{proof}
	   
	The image of the tangles in Figure \ref{fig.tt} under the RT functor was computed using the Michigan State University HPCC. We find that their components in the basis $\mathcal{B}$ are:
    
	\begin{align}
		TT^+&=
		\scalebox{.6}{$
			\begin{bmatrix}
				\left(s^{2}+q^{-2}s^{-2}\right) \left(1 - \dfrac{2}{q^{2}} + \dfrac{2}{q^{4}} - \dfrac{2}{q^{6}} + \dfrac{4}{q^{8}} - \dfrac{2}{q^{10}}\right) - 2 + \dfrac{3}{q^{2}} - \dfrac{4}{q^{4}} + \dfrac{4}{q^{6}} - \dfrac{8}{q^{8}} + \dfrac{4}{q^{10}}
				\\
				\left(s^{4}+q^{-4}s^{-4}\right) \left(2 - \dfrac{2}{q^{2}} - \dfrac{4}{q^{6}} + \dfrac{4}{q^{8}}\right) + \left(s^{2}+q^{-2}s^{-2}\right) \left(-5 + \dfrac{4}{q^{2}} + \dfrac{12}{q^{6}} - \dfrac{8}{q^{8}} - \dfrac{4}{q^{10}}\right) + 4 + \dfrac{3}{q^{2}} - \dfrac{6}{q^{4}} - \dfrac{6}{q^{6}} - \dfrac{8}{q^{8}} + \dfrac{16}{q^{10}}
				\\
				\left(s^{2}+q^{-2}s^{-2}\right) \left(\dfrac{2}{q^{2}} - \dfrac{2}{q^{4}} + \dfrac{2}{q^{6}} - \dfrac{4}{q^{8}} + \dfrac{2}{q^{10}}\right) + 1 - \dfrac{4}{q^{2}} + \dfrac{4}{q^{4}} - \dfrac{4}{q^{6}} + \dfrac{8}{q^{8}} - \dfrac{4}{q^{10}}
			\end{bmatrix}$}
		\label{eq.tt+}
		\\
		TT^-&=\scalebox{.6}{
			$\begin{bmatrix}
				(s^{2}+q^{-2}s^{-2}) \left(q^{2} - 2 + \dfrac{2}{q^{2}} - \dfrac{2}{q^{4}} + \dfrac{4}{q^{6}} - \dfrac{2}{q^{8}}\right) - 1 + \dfrac{4}{q^{2}} - \dfrac{4}{q^{4}} + \dfrac{4}{q^{6}} - \dfrac{8}{q^{8}} + \dfrac{4}{q^{10}}
				\\
				(s^{4}+q^{-4}s^{-4}) \left(2 q^{2} - 2 - \dfrac{4}{q^{4}} + \dfrac{4}{q^{6}}\right) + (s^{2}+q^{-2}s^{-2}) \left(- 3 q^{2} - 4 + \dfrac{8}{q^{2}} + \dfrac{4}{q^{4}} + \dfrac{8}{q^{6}} - \dfrac{12}{q^{8}}\right) + q^2 + 10  - \dfrac{6}{q^{2}} - \dfrac{6}{q^{4}} - \dfrac{16}{q^{6}} + \dfrac{8}{q^{8}} + \dfrac{8}{q^{10}}
				\\
				(s^{2}+q^{-2}s^{-2}) \left(2 - \dfrac{2}{q^{2}} + \dfrac{2}{q^{4}} - \dfrac{4}{q^{6}} + \dfrac{2}{q^{8}}\right) - q^{2} - \dfrac{4}{q^{2}} + \dfrac{4}{q^{4}} - \dfrac{4}{q^{6}} + \dfrac{8}{q^{8}} - \dfrac{4}{q^{10}}
			\end{bmatrix}$}\,.
		\label{eq.tt-}
	\end{align}

    \begin{figure}
    \[TT^+=
    \begin{tikzpicture}
        \tikzstyle over=[preaction={draw,line width=4pt,white}]
            \draw[->] (0,-.5) to (0,0);
            \draw[<-] (.5,-.5) to (.5,0);
			\draw[doubled] (1,0) to[out=180,in=-90] (0,1); 
            \draw[doubled] (0,0) to[out=90,in=-90] (1,1);
			\draw[doubled] (0,1) to[out=90,in=-90] (1,2); 
            \draw[doubled] (1,1) to[out=90,in=-90] (0,2); 
			\draw[doubled](0,1) to[out=90,in=-90] (1,2); 
				\draw[doubled] (1,2) to[out=90,in=-90] (0,3); 
			\draw[doubled] (0,2) to[out=90,in=180] (1,3);  
			\draw[doubled] (0,2) to[out=90,in=180] (1,3); 
			\draw[doubled] (1,3) to[out=0,in=0] (1,0);
            \draw[doubled] (0,3) to (0,3.05);
            \draw (0.5,3.05) to[out=90,in=-90] (0,3.5);
            \draw[over] (0,3.05) to[out=90,in=-90] (0.5,3.5);
            \draw (0.5,3.5) to[out=90,in=-90] (0,4);
            \draw[over] (0,3.5) to[out=90,in=-90] (0.5,4);
            \draw (0.5,4) to[out=90,in=-90] (0,4.5);
            \draw[over] (0,4) to[out=90,in=-90] (0.5,4.5);
            \draw (0.5,4.5) to[out=90,in=-90] (0,5);
            \draw[over] (0,4.5) to[out=90,in=-90] (0.5,5);
            \draw (0.5,5) to[out=90,in=-90] (0,5.5);
            \draw[over] (0,5) to[out=90,in=-90] (0.5,5.5);
            \draw (0.5,5.5) to[out=90,in=-90] (0,6);
            \draw[over,<-] (0,5.5) to[out=90,in=-90] (0.5,6);
            \draw[point=1] (0.25,6) to[out=180, in=-90] (0,6.5);
            \draw[over] (0,6) to[out=90,in=180] (0.25, 6.25);
            \draw (0.25, 6.25) to[out=0,in=90] (0.5, 6);
            \draw[over,point=.25] (0.5,6.5) to[out=270, in=0] (0.25,6);
        \end{tikzpicture}
        \qquad\qquad
        TT^-=
		\begin{tikzpicture}
        \tikzstyle over=[preaction={draw,line width=4pt,white}]
            \draw[->] (0,-.5) to (0,0);
            \draw[<-] (.5,-.5) to (.5,0);
			\draw[doubled] (1,0) to[out=180,in=-90] (0,1); 
            \draw[doubled] (0,0) to[out=90,in=-90] (1,1);
			\draw[doubled] (0,1) to[out=90,in=-90] (1,2); 
            \draw[doubled] (1,1) to[out=90,in=-90] (0,2); 
			\draw[doubled](0,1) to[out=90,in=-90] (1,2); 
				\draw[doubled] (1,2) to[out=90,in=-90] (0,3); 
			\draw[doubled] (0,2) to[out=90,in=180] (1,3);  
			\draw[doubled] (0,2) to[out=90,in=180] (1,3); 
			\draw[doubled] (1,3) to[out=0,in=0] (1,0);
            \draw[doubled] (0,3) to (0,3.05);
            \draw (0.5,3.05) to[out=90,in=-90] (0,3.5);
            \draw[over] (0,3.05) to[out=90,in=-90] (0.5,3.5);
            \draw (0.5,3.5) to[out=90,in=-90] (0,4);
            \draw[over] (0,3.5) to[out=90,in=-90] (0.5,4);
            \draw (0.5,4) to[out=90,in=-90] (0,4.5);
            \draw[over] (0,4) to[out=90,in=-90] (0.5,4.5);
            \draw (0.5,4.5) to[out=90,in=-90] (0,5);
            \draw[over] (0,4.5) to[out=90,in=-90] (0.5,5);
            \draw (0.5,5) to[out=90,in=-90] (0,5.5);
            \draw[over] (0,5) to[out=90,in=-90] (0.5,5.5);
            \draw (0.5,5.5) to[out=90,in=-90] (0,6);
            \draw[over,<-] (0,5.5) to[out=90,in=-90] (0.5,6);
            \draw (0,6) to[out=90,in=180] (0.25, 6.25);
            \draw[over,point=1] (0.25,6) to[out=180, in=-90] (0,6.5);
            \draw[point=.25] (0.5,6.5) to[out=270, in=0] (0.25,6);
            \draw[over] (0.25, 6.25) to[out=0,in=90] (0.5, 6);
		\end{tikzpicture}
        \]
        \caption{Positively and negatively clasped antiparallel (2,6)-cablings of the trefoil.}
        \label{fig.tt}
	\end{figure} 
	
	\subsection{Horizontal concatenation}
	Given the construction of the AS links via horizontal concatenation, first by concatenating $TT^+$ and $TT^-$, and followed by concatenating this joined tangle with itself, it will be useful to carefully describe the components of the resulting tangle under this horizontal operation. 
	
	For $f,g\in \End_{\Uq}(V(0,\a)\otimes V(0,\a)^\ast)$, define
	\begin{align}
		f\boxtimes g:= 
		\begin{tikzpicture}[thick,scale=0.5]
			\draw (0,0) rectangle (2,1) node[pos=.5] {$f$};
			\draw (3,0) rectangle (5,1) node[pos=.5] {$g$};
			\def\offset{0.2}
			\draw[point=.5] (3+\offset,1) .. controls (2.75,1.5) and (2.25,1.5) .. (2-\offset,1);
			\draw[point=.5] (2-\offset,0) .. controls (2.25,-.5) and (2.75,-.5) .. (3+\offset,0);
			\draw[->] (0+\offset, 1) to (0+\offset, 2);
			\draw[->] (5-\offset, 0) to (5-\offset, -1);
			\draw[point=.5] (5-\offset, 2) to (5-\offset, 1);
			\draw[point=.5] (\offset, -1) to (\offset, 0);
		\end{tikzpicture}\in \End_{\Uq}(V(0,\a)\otimes V(0,\a)^\ast)\,.
	\end{align}
	Since $f$ and $g$ themselves are a linear combination of three tangles, we will first consider $f\boxtimes g$ for $f$ and $g$ belonging to our preferred basis, then extend linearly. More precisely, 
	\begin{align}
		f\boxtimes g = \sum_{i,j\in \mathcal{B}
		}f_ig_j\cdot (i\boxtimes j)\,.
	\end{align} 
    Table \ref{table.boxtimes} summarizes horizontal concatenation in the basic cases.
	
    \begin{table}[h!]
		\centering 
        \footnotesize
		\begin{NiceTabular}{c|c|c|c}[cell-space-limits=3pt]
			$\boxtimes$ & $\ll$ & $\cc$ & $\xx$ \\ \hline
			
			$\ll$ &
			{\footnotesize $\begin{bmatrix} 0\\0\\0 \end{bmatrix}$} &
			{\footnotesize $\begin{bmatrix} 1\\0\\0 \end{bmatrix}$} &
			{\footnotesize $\begin{bmatrix} \frac{\br{\a}\br{\a+1}}{q} \\ 0\\0 \end{bmatrix}$} \\ \hline
			
			$\cc$ &
			{\footnotesize $\begin{bmatrix} 1\\0\\0 \end{bmatrix}$} &
			{\footnotesize $\begin{bmatrix} 0\\1\\0 \end{bmatrix}$} &
			{\footnotesize $\begin{bmatrix} 0\\0\\1 \end{bmatrix}$} \\ \hline
			
			$\xx$ &
			{\footnotesize $\begin{bmatrix} \frac{\br{\a}\br{\a+1}}{q} \\ 0\\0 \end{bmatrix}$} &
			{\footnotesize $\begin{bmatrix} 0\\0\\1 \end{bmatrix}$} &
			{\footnotesize $\begin{bmatrix}
					(s^{2} + q^{-2}s^{-2})\left(1 + \frac{1}{q^{2}}\right) - \frac{3}{q^{2}} - \frac{1}{q^{4}} \\
					- (s^{4} + q^{-4}s^{-4}) + (s^{2} + q^{-2}s^{-2}) \left(3 + \frac{1}{q^{2}}\right) - 2 - \frac{4}{q^{2}} \\
					2(s^{2} + q^{-2}s^{-2}) - 3 - \frac{1}{q^{2}}
				\end{bmatrix}$}
		\end{NiceTabular}%
		\caption{Components of $\boxtimes$ between pairs of basis elements.}
		\label{table.boxtimes}
	\end{table}
	
		The last step in our computation of the Links--Gould polynomial of AS links is to compute the Links--Gould polynomial of the link where the concatenation $(TT^+\boxtimes TT^-)^{\boxtimes n}$ is replaced by one of the three basis tangles. For $f\in \End_{\Uq}(V(0,\a)\otimes V(0,\a)^*)$, define
	\begin{align}
		\AS^*(~f~)=
		\left\la~
		\begin{tikzpicture}[scale=.75]
			\def\offset{0.2}
			\draw (2-\offset,1) to[out = 90, in = 0] (\offset,2);
			\draw[over] (\offset,1) to[out = 90, in = -90] (1,2.5);
            \draw[->] (1,2.5) to (1,2.6);
			\draw (-1-\offset,-1) to[out=90, in=90] (.5+2*\offset,-1);
			\draw[->,over] (-.5,-1) to[out=90,in=180] (\offset,2);
			\draw (-.5,-1) to (-.5, -2);
			\draw[over,point=.5] (\offset, -1) to (\offset,0);
			\draw (\offset,-1) arc(180:360:1-\offset);
			\draw[->] (2-\offset, 0) to (2-\offset,-1);
			\draw[over,point=.6] (-1-\offset,-1) to[out=-90, in=-90] (.5+2*\offset,-1);
			\draw (0,0) rectangle (2,1) node[pos=.5] {$f$};
		\end{tikzpicture}~
		\right\ra\,.
	\end{align} 
    Note that $\LG_{\AS(n)}(s,q)=\AS^*((TT^+\boxtimes TT^-)^{\boxtimes n})$. The polynomials associated to basis vectors are 
	\begin{align}
		\AS^*(~\ll~)=&~(s^{6}+q^{-6}s^{-6}) + (s^{4}+q^{-4}s^{-4})\left(-1 - \frac{1}{q^{2}}\right) 
		\label{eq.ASll}
		\\
		&+ (s^{2}+q^{-2}s^{-2})\left(1 + \frac{2}{q^{2}}\right) - 1 - \frac{2}{q^{2}} - \frac{1}{q^{4}} \, ,\notag
		\\
		\AS^*(~\cc~)=&~(s^{4}+q^{-4}s^{-4}) - (s^{2}+q^{-2}s^{-2})\left(2 + \frac{2}{q^{2}}\right) + 1 + \frac{4}{q^{2}} + \frac{1}{q^{4}} \, ,
		\label{eq.AScc}
		\\
		\AS^*(~\xx~)=&~
		(s^{6}+q^{-6}s^{-6}) \left(3 + \frac{1}{q^{2}}\right) - (s^{4}+q^{-4}s^{-4}) \left(7 + \frac{8}{q^{2}} + \frac{1}{q^{4}}\right) 
		\label{eq.ASxx}
		\\
		&+ (s^{2}+q^{-2}s^{-2})\left(7 + \frac{18}{q^{2}} + \frac{7}{q^{4}}\right) - 3 - \frac{18}{q^{2}} - \frac{17}{q^{4}} - \frac{2}{q^{6}}\, ,\notag
	\end{align}
	which we write as a row vector $\overrightarrow{\AS}^*$. 
	\begin{theorem}\label{thm.algorithm}
		The $n^{\mathrm{\textit{th}}}$ AS link has Links--Gould polynomial
		\[
		\LG_{\AS(n)}(s,q)=\overrightarrow{\AS}^*\cdot (TT^+\boxtimes TT^-)^{\boxtimes n}.
		\]
	\end{theorem}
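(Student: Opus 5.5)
The plan is to show that the assignment $f\mapsto \AS^*(f)$ is a $\BC(q,s)$-linear functional on $\End_{\Uq}(V(0,\a)\otimes V(0,\a)^\ast)$. Once we know this, we expand $(TT^+\boxtimes TT^-)^{\boxtimes n}$ in the basis $\mathcal{B}$ and pair it with the row vector $\overrightarrow{\AS}^*$. The argument has three ingredients: a topological identification, linearity of the closure operation, and an inductive description of the components of the iterated concatenation.

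\textbf{Step 1: the topological identification.} Following the construction of $\AS(n)$, the link is obtained from the $\uparrow\downarrow$ tangle $(TT^+\boxtimes TT^-)^{\boxtimes n}$ by inserting it into the box of the closure diagram defining $\AS^*$. We then cut an open component to get a 1-tangle, and by the modified trace construction recalled above, $\LG_{\AS(n)}$ is the scalar $\la\cdot\ra$ of its RT image. This is exactly the identity $\LG_{\AS(n)}(s,q)=\AS^*((TT^+\boxtimes TT^-)^{\boxtimes n})$ noted before the theorem. Here I would check two things:
\begin{itemize}
\item the orientations and the framing are compatible, so that $\LG$ is a framing-independent invariant and no writhe correction is needed, or any needed correction is already absorbed into $TT^\pm$;
\item the choice of cut component does not matter, which follows from the ambidexterity of $V(0,\a)$ in \cite{GPT, GeerKujawaPatureau}.
\end{itemize}

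\textbf{Step 2: linearity.} The RT functor is a monoidal functor. The diagram defining $\AS^*(f)$ is a composition of fixed morphisms (cups, caps, crossings) with $\id\otimes f\otimes\id$, so $f\mapsto\AS^*(f)$ is linear. Writing $f=f_{\llsub}\,\ll+f_{\ccsub}\,\cc+f_{\xxsub}\,\xx$ then gives $\AS^*(f)=\overrightarrow{\AS}^*\cdot(f_{\llsub},f_{\ccsub},f_{\xxsub})^T$, with the three values \eqref{eq.ASll}--\eqref{eq.AScc}--\eqref{eq.ASxx} computed directly, for instance via the matrices of $\cc$ and $\xx$ above.

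\textbf{Step 3: components of the iterated concatenation.} We need the component vector of $(TT^+\boxtimes TT^-)^{\boxtimes n}$. The operation $\boxtimes$ is bilinear, since it is again a composite of fixed morphisms with $f\otimes g$. It is also associative up to isotopy, since concatenating three boxes is the same tangle in either bracketing. Hence its components are determined by the structure constants in Table~\ref{table.boxtimes}, and the $n$-fold product is computed recursively. Its input is the component vector of $TT^\pm$ from \eqref{eq.tt+} and \eqref{eq.tt-}.

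The table entries themselves come from the Lemma. For basis elements $i,j$ we compute $\tr_R$, $\tr_T$ and $\widetilde{\tr}_R$ of $i\boxtimes j$ and solve the linear system \eqref{eq.linsys}, which is invertible for generic $\a,q$. Many entries are immediate by isotopy:
\begin{itemize}
\item $\cc$ acts as a unit, since $\cc\boxtimes g$ is isotopic to $g$;
\item $\ll\boxtimes\ll$ contains a closed $V(0,\a)$ loop, so it vanishes because the quantum dimension is $0$;
\item $\ll\boxtimes\xx$ reduces to $\tr_R(\xx)$ times $\cc$.
\end{itemize}
This leaves only $\xx\boxtimes\xx$ to be computed by traces.

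The main obstacle is Step 1: verifying carefully that the closure diagram, together with the horizontal concatenation pattern and its orientations, really reproduces $\AS(n)$ as drawn by Allen--Swenberg. This would be done by an explicit isotopy of Figure~\ref{fig:FIGAS}, tracking the clasps of $TT^\pm$. The remaining steps are formal linearity arguments together with the machine-checked computations already recorded.
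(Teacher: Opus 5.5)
Your strategy matches the paper's. The theorem follows from the identity $\LG_{\AS(n)}(s,q)=\AS^*((TT^+\boxtimes TT^-)^{\boxtimes n})$, which the paper, like you, takes from the construction of the Allen--Swenberg links rather than from an explicit isotopy. This is combined with linearity of $\AS^*$, bilinearity and associativity of $\boxtimes$ with structure constants from Table~\ref{table.boxtimes}, and the basis values \eqref{eq.ASll}--\eqref{eq.ASxx}.

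However, one of your justifications of the table is wrong: $\ll\boxtimes\xx$ is not a multiple of $\cc$. In $\ll\boxtimes g$, the left output strand is the untouched left strand of $\ll$. The right strand of $\ll$, together with the two connecting arcs, closes off the left ($V(0,\a)$) strand of $g$ by a loop on its left. Hence $\ll\boxtimes g=\id\otimes\tr_L(g)=\la\tr_L(g)\ra\cdot\ll$, where $\tr_L$ is the partial trace over the first factor. Symmetrically, $g\boxtimes\ll=\la\tr_R(g)\ra\cdot\ll$.

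For $g=\xx$ both scalars equal $\frac{\br{\a}\br{\a+1}}{q}$, since rotating the picture of $\tr_R(\xx)$ by $\pi$ in the plane gives that of $\tr_L(\xx)$. This is exactly the table entry, with the coefficient in the $\ll$-slot. Putting it in the $\cc$-slot, as you state, changes a structure constant and therefore the recursion for the components of $(TT^+\boxtimes TT^-)^{\boxtimes n}$, so the resulting formula would be wrong.

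Once this is corrected, your remaining observations are right: $\cc$ is a two-sided unit, $\ll\boxtimes\ll=0$, and only $\xx\boxtimes\xx$ needs the trace computation of the Lemma. With that, your argument coincides with the paper's.
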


	\begin{remark}\label{rem.AS1}
    The first AS polynomial $\LG_{\AS(1)}(s,q)$ is computed explicitly as 
	\begin{tiny}
	    
		\begin{align*}
			&(s^{12}+q^{-12}s^{-12}) \left(8 q^{2} + 8 - \frac{8}{q^{2}} - \frac{24}{q^{4}} - \frac{8}{q^{6}} + \frac{32}{q^{10}} + \frac{12}{q^{12}} - \frac{8}{q^{14}} - \frac{8}{q^{16}} - \frac{16}{q^{18}} + \frac{12}{q^{20}}\right) + 
			\notag\\
			&(s^{10}+q^{-10}s^{-10}) \left(- 40 q^{2} - 88 + \frac{12}{q^{2}} + \frac{184}{q^{4}} + \frac{160}{q^{6}} + \frac{24}{q^{8}} - \frac{232}{q^{10}} - \frac{200}{q^{12}} + \frac{20}{q^{14}} + \frac{104}{q^{16}} + \frac{124}{q^{18}} - \frac{24}{q^{20}} - \frac{44}{q^{22}}\right) + 
			\notag\\
			&(s^{8}+q^{-8}s^{-8}) \left(90 q^{2} + 336 + \frac{176}{q^{2}} - \frac{564}{q^{4}} - \frac{900}{q^{6}} - \frac{374}{q^{8}} + \frac{694}{q^{10}} + \frac{1186}{q^{12}} + \frac{260}{q^{14}} - \frac{456}{q^{16}} - \frac{596}{q^{18}} - \frac{176}{q^{20}} + \frac{276}{q^{22}} + \frac{48}{q^{24}}\right) + 
			\notag\\
			&(s^{6}+q^{-6}s^{-6}) \left(- 120 q^{2} - 668 - \frac{866}{q^{2}} + \frac{738}{q^{4}} + \frac{2516}{q^{6}} + \frac{1914}{q^{8}} - \frac{792}{q^{10}} - \frac{3544}{q^{12}} - \frac{2002}{q^{14}} + \frac{800}{q^{16}} + \frac{1884}{q^{18}} + \frac{1184}{q^{20}} - \frac{604}{q^{22}} - \frac{424}{q^{24}} - \frac{16}{q^{26}}\right) + 
			\notag\\
			&(s^{4}+q^{-4}s^{-4}) \left(100 q^{2} + 805 + \frac{1816}{q^{2}} + \frac{216}{q^{4}} - \frac{4024}{q^{6}} - \frac{4980}{q^{8}} - \frac{1100}{q^{10}} + \frac{5824}{q^{12}} + \frac{6436}{q^{14}} + \frac{396}{q^{16}} - \frac{3544}{q^{18}} - \frac{3696}{q^{20}} + \frac{88}{q^{22}} + \frac{1436}{q^{24}} + \frac{228}{q^{26}}\right) + 
			\notag\\
			&(s^{2}+q^{-2}s^{-2}) \left(- 48 q^{2} - 606 - \frac{2154}{q^{2}} - \frac{2082}{q^{4}} + \frac{3290}{q^{6}} + \frac{7788}{q^{8}} + \frac{5154}{q^{10}} - \frac{4404}{q^{12}} - \frac{11360}{q^{14}} - \frac{4914}{q^{16}} + \frac{3516}{q^{18}} + \frac{6540}{q^{20}} + \frac{2676}{q^{22}} - \frac{2284}{q^{24}} - \frac{1076}{q^{26}} - \frac{40}{q^{28}}\right) 
			\notag\\
			&+ 10 q^{2} + 261
			+ \frac{1530}{q^{2}} + \frac{3001}{q^{4}} - \frac{190}{q^{6}} - \frac{7308}{q^{8}} - \frac{8354}{q^{10}} - \frac{1020}{q^{12}} + \frac{10968}{q^{14}} + \frac{10644}{q^{16}} - \frac{56}{q^{18}} - \frac{6704}{q^{20}} - \frac{6280}{q^{22}} + \frac{784}{q^{24}} + \frac{2376}{q^{26}} + \frac{344}{q^{28}} \, .
		\end{align*}
	\end{tiny}
	In particular, the $q=1$ specialization of the Links--Gould polynomial for the first Allen--Swenberg link is
		\begin{align*}
			\LG_{\AS(1)}(s,1)=\frac{(s+1)^4 \cdot (s-1)^4}{s^4} = (s-s^{-1})^4 = \Delta_{H\#H}(s^2)^2.
		\end{align*} 
		This is expected and a confirmation of the result of Allen--Swenberg \cite{AS}.
	\end{remark}

	Given the complexity of the expressions involved, a general closed formula is difficult to write. Nevertheless, the leading and trailing terms of the polynomials are readily determined for all $n$.
	
	In the following corollary, we consider $\LG$ as a Laurent polynomial in $s$ with coefficients in $\BZ[q,q^{-1}]$. By \emph{leading term}, we refer to the leading term of the polynomial in $s$ and take its leading term in the $\BZ[q,q^{-1}]$ coefficient. By \emph{trailing term}, we refer to the trailing term of the polynomial in $s$ and take its leading term in the $\BZ[q,q^{-1}]$ coefficient. We will use $\sim$ to indicate taking the leading term of a polynomial.
	\begin{corollary}\label{cor.leading}
		The leading and trailing terms of $\LG_{\AS(n)}$ are equal to {$s^{4+8n}\cdot q^{2n}\cdot(n+1)\cdot4^n$} and {$s^{-4-8n}\cdot q^{-4-6n}\cdot(n+1)\cdot4^n$}, respectively. Therefore, the span of the Links--Gould polynomial in the variable $s$ is 
		\[\mathrm{span}_s(\LG_{\AS(n)}) = 4\cdot(4n+2)\,.\]
	\end{corollary}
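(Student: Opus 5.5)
We prove the corollary by tracking only leading and trailing terms through the formula of Theorem~\ref{thm.algorithm}. Write $M:=TT^+\boxtimes TT^-$, whose $\mathcal{B}$-components come from \eqref{eq.tt+}, \eqref{eq.tt-} and Table~\ref{table.boxtimes}. The multiplication $\boxtimes$ makes the span of $\mathcal{B}$ into a commutative algebra with unit $\cc$. Its only nontrivial structure constants are $\ll\boxtimes\xx=\frac{\br{\a}\br{\a+1}}{q}\,\ll$ and the $\xx\boxtimes\xx$ column of Table~\ref{table.boxtimes}. So $M^{\boxtimes n}$ is a power in a 3-dimensional commutative algebra, and $\LG_{\AS(n)}=\overrightarrow{\AS}^*\cdot M^{\boxtimes n}$.

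The first step is a degree count in $s$. Under $s\mapsto s$, each component is a Laurent polynomial symmetric under $s\mapsto q^{-1}s^{-1}$. We record the $s$-degrees:
\begin{itemize}
\item $TT^\pm$ have components of degrees $(2,4,2)$;
\item $\br{\a}\br{\a+1}\sim q s^2$ has degree $2$;
\item $\xx\boxtimes\xx$ has degrees $(2,4,2)$;
\item $\overrightarrow{\AS}^*$ has degrees $(6,4,6)$.
\end{itemize}
To organise this, assign a formal weight: $\ll\mapsto 4$, $\cc\mapsto 0$, $\xx\mapsto 2$. Then every structure constant has $s$-degree at most the weight difference. For example, $\xx\boxtimes\xx$ has $\ll$-coefficient of degree $2\le 4-2-2+\dots$; this needs checking. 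Then $\deg_s$ of the $\ll$, $\cc$, $\xx$ components of $M$ are $4$, $4$, $4$ respectively.

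Rather than guess, I would compute $\deg_s$ of each component of $M$ exactly, together with its top coefficient in $\BZ[q^{\pm1}]$. Taking the leading parts (top $s$-term, then top $q$-term) gives a "leading algebra": the structure constants are truncated to their top-degree terms wherever those terms attain the maximal possible total degree. Since leading parts multiply, $M^{\boxtimes n}$'s leading parts are computed in this graded truncated algebra. Concretely, I expect the maximal degree $4+8n$ to come from two sources:
\begin{itemize}
\item terms of the form $\cc$-component of $M$ (degree 4, leading $\sim c\,s^4$) raised to the power $n$, giving $\sim c^n s^{4n}$, then paired with $\overrightarrow{\AS}^*$;
\item mixed terms $\binom{n}{1}$ where one factor contributes via $\ll$ or $\xx$.
\end{itemize}
The factor $(n+1)$ strongly suggests a nilpotent (Jordan-block) phenomenon in the leading algebra. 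The leading part of $M$ should be of the form $a(\cc+\epsilon)$ with $\epsilon^2$ of lower degree, so that $M^{\boxtimes n}\sim a^n(\cc+n\epsilon)$. Pairing with $\overrightarrow{\AS}^*$ then gives $a^n(A+nB)$ with $A=B$, yielding $(n+1)$. Matching against the claim, we expect $a\sim 4q^2s^8$ for the leading end. This is plausible: $\ll\boxtimes\xx$ and $\xx\boxtimes\xx$ raise degree, which reflects the $\BC[x]/(x^2)$ nilpotent in $\End(P)$.

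The main obstacle is making this degree bookkeeping rigorous. We must verify that no cancellation occurs in the top coefficient and that all other contributions are strictly lower in $s$ (or in $q$ at equal $s$-degree). I would handle this by rescaling: substitute the components into a matrix of the linear map $X\mapsto M\boxtimes X$ in basis $\mathcal{B}$, and conjugate by $\mathrm{diag}(s^{w_\ll},1,s^{w_\xx})$ with weights chosen so the rescaled matrix has a finite limit as $s\to\infty$. The limit matrix should be $a$ times a unipotent Jordan block, which gives the $(n+1)a^n$ behaviour directly. Pairing with the rescaled top terms of \eqref{eq.ASll}--\eqref{eq.ASxx} then yields $s^{4+8n}q^{2n}(n+1)4^n$, after confirming the relevant limit of $M\boxtimes\cdot$ has eigenvalue $4q^2$ (top $q$-term).

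The trailing term follows from the symmetry $s\mapsto q^{-1}s^{-1}$ of every ingredient. This symmetry sends $s^{4+8n}q^{2n}$ to $s^{-4-8n}q^{-4-8n+2n}=s^{-4-8n}q^{-4-6n}$, as claimed. The span in $s$ is then $2(4+8n)=4(4n+2)$.
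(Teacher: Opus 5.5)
Your overall strategy is the paper's, and the symmetry and span step is fine. That strategy is: truncate $\overrightarrow{\AS}^*\cdot M^{\boxtimes n}$ to its leading monomial, find a Jordan block producing $(n+1)4^n$, and obtain the trailing term and span from $s\mapsto q^{-1}s^{-1}$. But there is a gap. The proposal never carries out the computation that actually constitutes the proof, and the value $a\sim 4q^2s^8$ is read off from the statement rather than derived. Several intermediate predictions are also wrong. From \eqref{eq.tt+}, \eqref{eq.tt-} and Table~\ref{table.boxtimes} one gets $M\sim 4q^2s^6\,\ll+4q^2s^8\,\cc+8s^6\,\xx$. So the $s$-degrees of the components of $M$ are $(6,8,6)$, not $(4,4,4)$, and there is no degree-$4$ $\cc$-component whose $n$-th power contributes.

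More importantly, the step ``the limit matrix is $a$ times a unipotent Jordan block, which gives the $(n+1)a^n$ behaviour directly'' is not right as stated. Nor is your heuristic that $\epsilon^2$ is of lower degree. The coefficient of $X_{\xxsub}$ in $(X\boxtimes M)_{\ccsub}$ is $(\xx\boxtimes\xx)_{\ccsub}M_{\xxsub}\sim -8s^{10}$, of larger $s$-degree than the diagonal entries $\sim 4q^2s^8$. The coefficient of $X_{\ccsub}$ in $(X\boxtimes M)_{\xxsub}$ is $M_{\xxsub}\sim 8s^6$. So the $\xx$-contributions are \emph{not} of lower $s$-degree.

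Work in the coordinates $(X_{\llsub},s^{-2}X_{\ccsub},X_{\xxsub})$, where all entries have $s$-degree at most $8$. Write $a_6,c_6$ for the $s^6$-coefficients of $M_{\llsub},M_{\xxsub}$, and $\lambda$ for the $s^8$-coefficient of $M_{\ccsub}$ plus $c_6$. Then the $s^8$-coefficient matrix of $X\mapsto X\boxtimes M$ is
\[
N(q)=\begin{bmatrix}\lambda&a_6&a_6+(1+q^{-2})c_6\\0&\lambda-c_6&-c_6\\0&c_6&\lambda+c_6\end{bmatrix}.
\]
This is $\lambda$ times a unipotent matrix, but the Jordan block has size $3$: the first row of $(N-\lambda I)^2$ is $\bigl(0,(1+q^{-2})c_6^2,(1+q^{-2})c_6^2\bigr)$. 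Consequently the $s^{8n+4}$-coefficient of $\LG_{\AS(n)}$ is
\[
\lambda^n+n\lambda^{n-1}\bigl(a_6+(2+q^{-2})c_6\bigr)+\tbinom{n}{2}\lambda^{n-2}(1+q^{-2})c_6^2,
\]
which is quadratic in $n$. The factor $(n+1)$ appears only after a second truncation to the top power of $q$. With $\lambda\sim 4q^2$, $a_6\sim 4q^2$, $c_6\sim 8$, the last term has $q$-degree $2n-4$. The first two terms give $4^n+n4^{n-1}\cdot 4=(n+1)4^n$ at $q^{2n}$. Your ``$A=B$'' comes from the pairing vector, namely the top coefficients $(1,1,3+q^{-2})$ of $(\AS^*(\ll),s^2\AS^*(\cc),\AS^*(\xx))$.

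That lexicographic bookkeeping ($s$ first, then $q$) is precisely the paper's separate induction on the $q$-degree. It is what reduces the problem to the paper's $2\times 2$ coupled recurrence for $T^{\boxtimes n}_{\llsub},T^{\boxtimes n}_{\ccsub}$. Once you supply these computations, your rescaling argument becomes a correct and somewhat more systematic version of the paper's proof.
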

	
	\begin{proof}
		We will prove the first part of the corollary for the leading term. The expression for the trailing term is a consequence of the symmetry $s\mapsto q^{-1}s^{-1}$ in each of the terms involved.
		
		Let us restrict to the leading terms of the expressions $TT^+$, $TT^-$, $\overrightarrow{\AS}^*$ given in Equations \eqref{eq.tt+}, \eqref{eq.tt-}, \eqref{eq.ASll}, \eqref{eq.AScc}, \eqref{eq.ASxx}, as well as those in Table \ref{table.boxtimes}.
		We compute
		\begin{align}
			TT^+\boxtimes TT^- \sim 
			\begin{bmatrix}
				4s^6q^2 \\ 4s^8q^2 \\ 8s^6    
			\end{bmatrix}=:T \, .
		\end{align}
		Moreover for any vector $x\cdot \ll + y\cdot \cc + z\cdot \xx$, we have
		\begin{align}
			\begin{bmatrix}
				x\\y\\z
			\end{bmatrix}
			\boxtimes
			T
			=
			\begin{bmatrix}
				4s^8q^2x + 4s^6q^2y + 8s^6z
				\\
				4s^8q^2y - 8s^8  z
				\\
				4s^6q^2 z + 8s^6 y  + 16s^6z 
			\end{bmatrix}\,.
		\end{align}
		After computing the $n^{\mathrm{th}}$ power with $T$, we multiply with the truncated $\overrightarrow{\AS}^*$ row vector: 
		\begin{align}
			S:=\begin{bmatrix}
				s^6 & s^4 & 3s^6
			\end{bmatrix}\,.
		\end{align}
		In the case of $\AS(1)$, we simply compute $ST=4s^{12}q^2+4s^{12}q^2+24s^{12}\sim 8s^{12}q^2$ and find our result is consistent with Remark \ref{rem.AS1}. 
		
		Observe that the $s$-leading terms of each component of $T\boxtimes T$ are derived only from the $\ll$ and $\cc$ components of $T$. An inductive argument then shows that
		\begin{align}
			\renewcommand{\arraystretch}{1.25}
			\begin{bmatrix}
				T^{\boxtimes n}_{\llsub} \\T^{\boxtimes n}_{\ccsub}\\T^{\boxtimes n}_{\xxsub}
			\end{bmatrix}
			\boxtimes
			T
			\sim
			\begin{bmatrix}
				4s^8q^2\cdot T^{\boxtimes n}_{\llsub} + 4s^6q^2\cdot T^{\boxtimes n}_{\ccsub}
				\\
				4s^8q^2\cdot T^{\boxtimes n}_{\ccsub}
				\\
				8s^6\cdot T^{\boxtimes n}_{\ccsub}
			\end{bmatrix}\,.
		\end{align}
		Another induction, now considering the degree in $q$, shows that 
		the contribution of the $T^{\boxtimes n}_{\xxsub}$ component to the $s$-leading term in 
		\[
		S\cdot \left(\begin{bmatrix}
			T^{\boxtimes n}_{\llsub} \\T^{\boxtimes n}_{\ccsub}\\T^{\boxtimes n}_{\xxsub}
		\end{bmatrix}
		\boxtimes
		T\right)
		\]
		is lesser by a factor of $q^2$. Hence, it suffices to consider the coupled sequence:
		\begin{align}
			\begin{cases}
				T^{\boxtimes (n+1)}_{\llsub} = 4s^8q^2\cdot T^{\boxtimes n}_{\llsub} + 4s^6q^2\cdot T^{\boxtimes n}_{\ccsub},
				&
				T^{\boxtimes 1}_{\llsub}=4s^6q^2
				\\
				T^{\boxtimes (n+1)}_{\ccsub} = 4s^8q^2\cdot T^{\boxtimes n}_{\ccsub},
				&
				T^{\boxtimes 1}_{\ccsub}=4s^8q^2
			\end{cases}\,.
		\end{align}
		The latter sequence is given explicitly by $T^{\boxtimes n}_{\ccsub}=(4s^8q^2)^n$ and then the former by $T^{\boxtimes n}_{\llsub}=ns^{-2}(4s^8q^2)^n$. Now substituting into $\LG_{\AS(n)}(s,q)\sim s^6\cdot T^{\boxtimes n}_{\llsub}+s^4\cdot T^{\boxtimes n}_{\ccsub}$ produces the desired formula.
	\end{proof}
	
	The $s$-span of the Links-Gould polynomial of a link $L$ with $\mu$ components provides a lower bound for the genus of any compact oriented surface embedded in $S^3$ with no closed components and whose boundary is $L$, see \cite{Kohli-Tahar, LNVdV25}: 
	\begin{align}
	    \mathrm{span}_s(\LG_L) \leq 4 \cdot (2~\mathrm{genus}(L) + \mu -1).
	\end{align}
	
	Along with the Seifert surface obtained from the Seifert algorithm applied to the diagram for $\AS(n)$ shown in Figure~\ref{fig:FIG1}, we determine the genus of the AS links.

    \begin{figure}[h!]
		\begin{center}
			\adjustbox{trim=0cm 0cm 0cm 0cm}{
				\includegraphics[scale=.45]{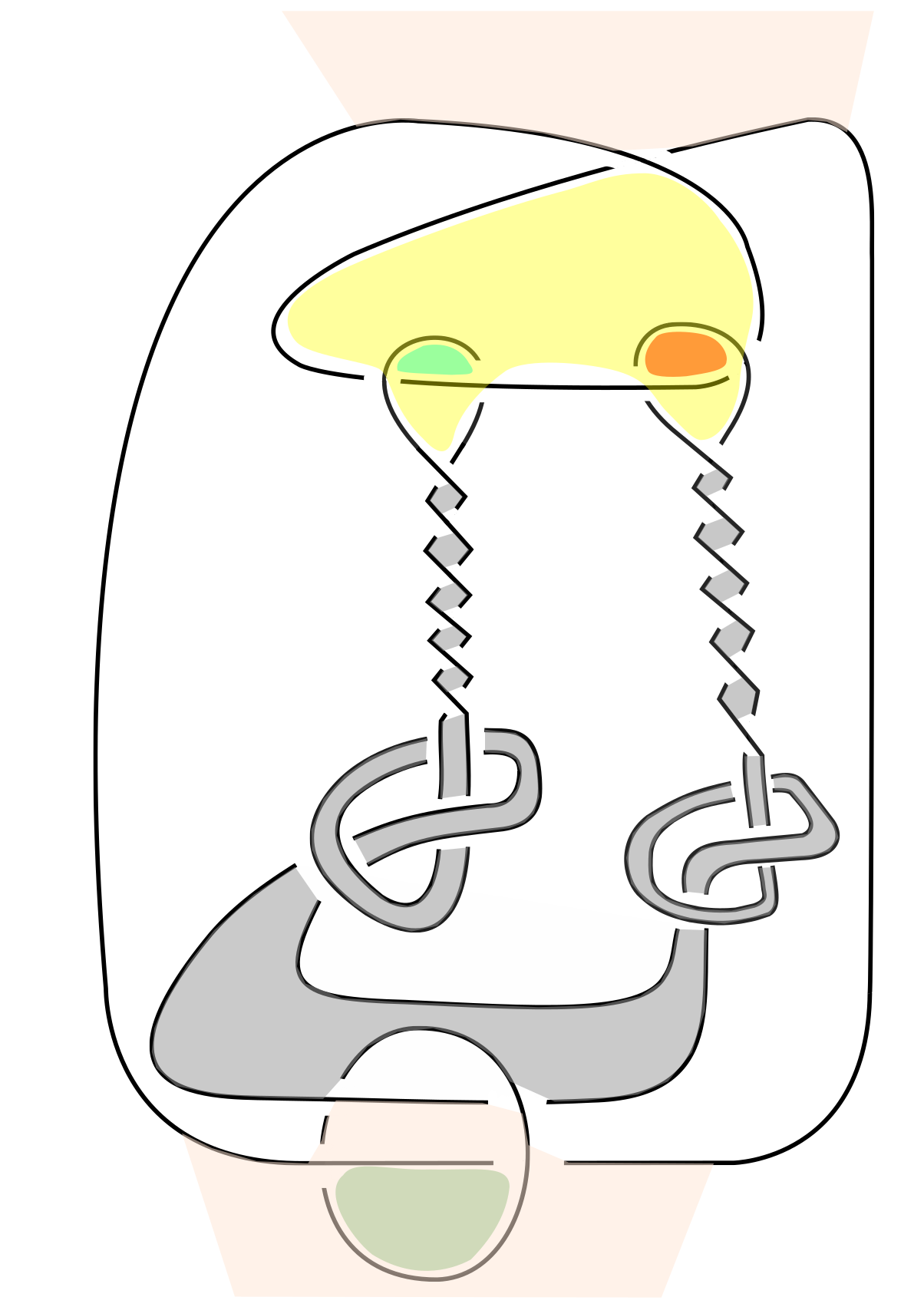}
			}
			\caption{A disk-band decomposition of a Seifert surface for $\AS(1)$ with $6$ disks, $11$ handles and $3$ boundary components.} \label{fig:FIG1}
		\end{center}
	\end{figure} 
    
	\begin{corollary}
		For all $n \in \mathbb{Z}_{\geq 1}$, $\mathrm{genus}(\AS(n)) = 2 n.$
	\end{corollary}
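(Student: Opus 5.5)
The plan is to obtain matching lower and upper bounds on the genus.

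\emph{Lower bound.} By Corollary~\ref{cor.leading}, $\mathrm{span}_s(\LG_{\AS(n)}) = 4(4n+2)$. Since $\AS(n)$ has $\mu=3$ components, the Kohli--Tahar / LNVdV inequality recalled above gives
\[
4(4n+2)\le 4\bigl(2\,\mathrm{genus}(\AS(n))+2\bigr).
\]
Hence $4n\le 2\,\mathrm{genus}(\AS(n))$, that is, $\mathrm{genus}(\AS(n))\ge 2n$. This inequality applies to any compact oriented surface without closed components bounding $L$, so it bounds every Seifert surface.

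\emph{Upper bound.} We exhibit a Seifert surface of genus $2n$. For $n=1$, Figure~\ref{fig:FIG1} gives a disk--band surface with $6$ disks and $11$ bands. Its Euler characteristic is $\chi=6-11=-5$, and with $3$ boundary components we get
\[
g=\frac{2-\chi-\mu}{2}=\frac{2+5-3}{2}=2.
\]
For general $n$, I would apply the Seifert algorithm to the diagram of $\AS(n)$ built from the horizontal concatenation $(TT^+\boxtimes TT^-)^{\boxtimes n}$ closed up as in $\AS^*$. The key step is to track how disks and bands grow with $n$. Each additional copy of $TT^+\boxtimes TT^-$ should contribute a fixed number $d$ of Seifert disks and $b$ bands, while the number of boundary components stays $3$.

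Matching the target genus requires $b-d=4$ per block. Then
\[
\chi(n)=-5-4(n-1)=-4n-1,
\]
and therefore
\[
g=\frac{2+4n+1-3}{2}=2n.
\]

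\emph{Main obstacle.} The lower bound is immediate from Corollary~\ref{cor.leading}. The real work is the upper bound: carefully counting Seifert circles and crossings in the general diagram. Concretely, I would count crossings $c(n)$ and Seifert circles $s(n)$ of the standard diagram, and use $\chi = s(n)-c(n)$. Since each block is an identical tangle whose oriented smoothing has a fixed local pattern, both counts should be affine in $n$, with the $n=1$ values $6$ and $11$ as in Figure~\ref{fig:FIG1}. If the raw diagram gives too many circles, I would first isotope to a diagram matching the one in Figure~\ref{fig:FIG1}, repeated blockwise. Combining the two bounds then gives $\mathrm{genus}(\AS(n))=2n$.
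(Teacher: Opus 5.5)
Your lower bound is exactly the paper's. Your $n=1$ computation ($6$ disks, $11$ bands, $\chi=-5$, genus $2$) also agrees with it.

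\textbf{The gap: the upper bound for $n\ge 2$ is circular.} You write that ``matching the target genus requires $b-d=4$ per block'' and then substitute that value. This runs the conclusion backwards instead of proving it. If an added block contributed $b-d>4$, the two bounds would not meet. As written, you have only shown $\mathrm{genus}(\AS(n))\ge 2n$, with equality only for $n=1$. The missing input is a concrete geometric count, which the paper's proof supplies: passing from $\AS(n)$ to $\AS(n+1)$ adds exactly $2$ disks and $6$ bands to the surface of Figure~\ref{fig:FIG1}. Hence $1-\chi(\Sigma_n)=6+4(n-1)=4n+2$, which meets the $\LG$ lower bound $1-\chi\ge 4n+2$.

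\textbf{The route you sketch for this count would not work as stated.} The numbers $6$ and $11$ count disks and bands of the disk--band surface in Figure~\ref{fig:FIG1}. They are not Seifert circles and crossings of the diagram assembled from $(TT^+\boxtimes TT^-)^{\boxtimes n}$. Each $TT^{\pm}$ as drawn in Figure~\ref{fig.tt} already has $20$ crossings: $12$ from doubling the trefoil, $6$ twists, and $2$ in the clasp. So that diagram has at least $40$ crossings when $n=1$, and you cannot set $c(1)=11$, $s(1)=6$.

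Moreover, Seifert's algorithm on clasped antiparallel cables of the trefoil should not be expected to give a minimal surface. The Whitehead double of the trefoil, which each $TT^{\pm}$ is modeled on, has genus $1$ but canonical genus greater than $1$. So counting $s(n)$ and $c(n)$ on that crossing-level diagram would most likely give an upper bound strictly above $2n$.

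What you need instead is a surface adapted to the satellite structure, with bands allowed to follow the knotted trefoil cables, as in Figure~\ref{fig:FIG1}. You then need to check that each new block $TT^+\boxtimes TT^-$ costs exactly $2$ disks and $6$ bands.
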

	
	\begin{proof}
		If $\Sigma$ is a Seifert surface for $L$ a link with $\mu$ components, set $k$ the number of disks in a disk-handle decomposition of $\Sigma$, and $l$ the number of $1$-handles. An Euler characteristic computation proves that
		\begin{align}
		    2~\mathrm{genus}(L) + \mu -1 = 1 - \chi(\Sigma) = 1 - k + l.
		\end{align}
		
		The Seifert surface $\Sigma_1$ for $\AS(1)$ represented in Figure~\ref{fig:FIG1} has $k = 6$ disks and $l = 11$ $1$-handles. So for this surface $1 - \chi(\Sigma_1) = 1 - 6 + 11 = 6$. Moreover, when passing from $\AS(n)$ to $\AS(n+1)$, $6$ handles and $2$ disks are added. Therefore $1 - \chi(\Sigma_n) = 4n + 2$, where $\Sigma_n$ is the natural Seifert surface from the diagram of $\AS(n)$. Along with the lower bound given by the $s$-span of $\LG$, this implies that the minimal value for $1 - \chi(\Sigma_n')$, where $\Sigma_n'$ is any Seifert surface for $\AS(n)$, is $4 n + 2$. So the genus of $\AS(n)$ is
		\begin{align}
		    \mathrm{genus}(\AS(n)) = \frac{(1-\chi)_{min}+1-\mu}{2} = \frac{4n + 2 + 1 - 3}{2} = 2 n.
		\end{align}
	\end{proof}

	
	\bibliographystyle{alpha}
	\bibliography{biblio}

@preamble{
   "\def\cprime{$'$} "
}

@article {AS,
    AUTHOR = {Allen, Samantha and Swenberg, Jacob H.},
     TITLE = {Do link polynomials detect causality in globally hyperbolic
              spacetimes?},
   JOURNAL = {J. Math. Phys.},
  FJOURNAL = {Journal of Mathematical Physics},
    VOLUME = {62},
      YEAR = {2021},
    NUMBER = {3},
     PAGES = {Paper No. 032503, 16},
      ISSN = {0022-2488,1089-7658},
   MRCLASS = {57Z05 (57K14)},
  MRNUMBER = {4227566},
MRREVIEWER = {Xin\ Liu},
       DOI = {10.1063/5.0040956},
       URL = {https://doi.org/10.1063/5.0040956},
}

@misc{ArnoldProblem,
  author = {Arnold, Vladimir I.},
  title = {Problems},
  note = {Written down by S. Duzhin, September 1998},
  url = {http://www.pdmi.ras.ru/~arnsem/Arnold/prob9809.ps.gz}
}

@book {ArnoldProblemBook,
    AUTHOR = {Arnold, Vladimir I.},
     TITLE = {Arnold's problems},
   EDITION = {revised},
      NOTE = {With a preface by V. Philippov, A. Yakivchik and M. Peters},
 PUBLISHER = {Springer-Verlag, Berlin; PHASIS, Moscow},
      YEAR = {2004},
     PAGES = {xvi+639},
      ISBN = {3-540-20614-0},
   MRCLASS = {58-02 (00A07 01A72 37-02 53-02 57-02)},
  MRNUMBER = {2078115},
}

@misc{Bax,
  author = {Baxshilloyev, Amirbek},
  title = {Causality {D}etection via {S}ymplectic {Q}uandles},
  year = {2025},
  note = {Preprint 2025, \href{https://arxiv.org/abs/2508.18323}{arXiv:2508.18323}}
}

@article {BeSaCausal,
    AUTHOR = {Bernal, Antonio N. and S\'anchez, Miguel},
     TITLE = {Globally hyperbolic spacetimes can be defined as `causal'
              instead of `strongly causal'},
   JOURNAL = {Classical Quantum Gravity},
  FJOURNAL = {Classical and Quantum Gravity},
    VOLUME = {24},
      YEAR = {2007},
    NUMBER = {3},
     PAGES = {745--749},
      ISSN = {0264-9381,1361-6382},
   MRCLASS = {53C50 (53C80)},
  MRNUMBER = {2294243},
MRREVIEWER = {Paul\ E.\ Ehrlich},
       DOI = {10.1088/0264-9381/24/3/N01},
       URL = {https://doi.org/10.1088/0264-9381/24/3/N01},
}

@article {BeSa1,
    AUTHOR = {Bernal, Antonio N. and S\'anchez, Miguel},
     TITLE = {On smooth {C}auchy hypersurfaces and {G}eroch's splitting
              theorem},
   JOURNAL = {Comm. Math. Phys.},
  FJOURNAL = {Communications in Mathematical Physics},
    VOLUME = {243},
      YEAR = {2003},
    NUMBER = {3},
     PAGES = {461--470},
      ISSN = {0010-3616,1432-0916},
   MRCLASS = {53C50 (53C80 83C05)},
  MRNUMBER = {2029362},
MRREVIEWER = {Paul\ E.\ Ehrlich},
       DOI = {10.1007/s00220-003-0982-6},
       URL = {https://doi.org/10.1007/s00220-003-0982-6},
}

@article {BeSa2,
    AUTHOR = {Bernal, Antonio N. and S\'anchez, Miguel},
     TITLE = {Smoothness of time functions and the metric splitting of
              globally hyperbolic spacetimes},
   JOURNAL = {Comm. Math. Phys.},
  FJOURNAL = {Communications in Mathematical Physics},
    VOLUME = {257},
      YEAR = {2005},
    NUMBER = {1},
     PAGES = {43--50},
      ISSN = {0010-3616,1432-0916},
   MRCLASS = {53C50 (83C20)},
  MRNUMBER = {2163568},
MRREVIEWER = {Paul\ E.\ Ehrlich},
       DOI = {10.1007/s00220-005-1346-1},
       URL = {https://doi.org/10.1007/s00220-005-1346-1},
}

@article {BeSa3,
    AUTHOR = {Bernal, Antonio N. and S\'anchez, Miguel},
     TITLE = {Further results on the smoothability of {C}auchy hypersurfaces
              and {C}auchy time functions},
   JOURNAL = {Lett. Math. Phys.},
  FJOURNAL = {Letters in Mathematical Physics},
    VOLUME = {77},
      YEAR = {2006},
    NUMBER = {2},
     PAGES = {183--197},
      ISSN = {0377-9017,1573-0530},
   MRCLASS = {53C50 (53C80 81T20)},
  MRNUMBER = {2254187},
MRREVIEWER = {Paul\ E.\ Ehrlich},
       DOI = {10.1007/s11005-006-0091-5},
       URL = {https://doi.org/10.1007/s11005-006-0091-5},
}

@book {Besse,
    AUTHOR = {Besse, Arthur L.},
     TITLE = {Manifolds all of whose geodesics are closed},
    SERIES = {Ergebnisse der Mathematik und ihrer Grenzgebiete [Results in
              Mathematics and Related Areas]},
    VOLUME = {93},
      NOTE = {With appendices by D. B. A. Epstein, J.-P. Bourguignon, L.
              B\'erard-Bergery, M. Berger and J. L. Kazdan},
 PUBLISHER = {Springer-Verlag, Berlin-New York},
      YEAR = {1978},
     PAGES = {ix+262},
      ISBN = {3-540-08158-5},
   MRCLASS = {53C20 (53C22 58G99)},
  MRNUMBER = {496885},
MRREVIEWER = {R.\ L.\ Bishop},
}

@misc{Chen,
  author = {Chen, Hongxu},
  title = {Medial quandles's capability of detecting causality and properties of their coloring on certain links and knot},
  year = {2024},
  note = {Preprint 2024, \href{https://arxiv.org/abs/2411.04477}{arXiv:2411.04477}}
}

@article {CHigherDimensions,
    AUTHOR = {Chernov, Vladimir},
     TITLE = {Causality and {L}egendrian linking for higher dimensional
              spacetimes},
   JOURNAL = {J. Geom. Phys.},
  FJOURNAL = {Journal of Geometry and Physics},
    VOLUME = {133},
      YEAR = {2018},
     PAGES = {26--29},
      ISSN = {0393-0440,1879-1662},
   MRCLASS = {53C50 (57M25 57R17)},
  MRNUMBER = {3850254},
MRREVIEWER = {Marco\ Golla},
       DOI = {10.1016/j.geomphys.2018.06.018},
       URL = {https://doi.org/10.1016/j.geomphys.2018.06.018},
}

@article {CMP,
    AUTHOR = {Chernov, Vladimir and Martin, Gage and Petkova, Ina},
     TITLE = {Khovanov homology and causality in spacetimes},
   JOURNAL = {J. Math. Phys.},
  FJOURNAL = {Journal of Mathematical Physics},
    VOLUME = {61},
      YEAR = {2020},
    NUMBER = {2},
     PAGES = {022503, 3},
      ISSN = {0022-2488,1089-7658},
   MRCLASS = {57K18 (83C05)},
  MRNUMBER = {4067131},
MRREVIEWER = {Giuseppe\ Nardelli},
       DOI = {10.1063/5.0002297},
       URL = {https://doi.org/10.1063/5.0002297},
}

@article {CNGAFA,
    AUTHOR = {Chernov, Vladimir and Nemirovski, Stefan},
     TITLE = {Legendrian links, causality, and the {L}ow conjecture},
   JOURNAL = {Geom. Funct. Anal.},
  FJOURNAL = {Geometric and Functional Analysis},
    VOLUME = {19},
      YEAR = {2010},
    NUMBER = {5},
     PAGES = {1320--1333},
      ISSN = {1016-443X,1420-8970},
   MRCLASS = {53C50 (53D10 57M25 57R17)},
  MRNUMBER = {2585576},
MRREVIEWER = {Paul\ E.\ Ehrlich},
       DOI = {10.1007/s00039-009-0039-x},
       URL = {https://doi.org/10.1007/s00039-009-0039-x},
}

@article {CNGT,
    AUTHOR = {Chernov, Vladimir and Nemirovski, Stefan},
     TITLE = {Non-negative {L}egendrian isotopy in {$ST^*M$}},
   JOURNAL = {Geom. Topol.},
  FJOURNAL = {Geometry \& Topology},
    VOLUME = {14},
      YEAR = {2010},
    NUMBER = {1},
     PAGES = {611--626},
      ISSN = {1465-3060,1364-0380},
   MRCLASS = {53D35},
  MRNUMBER = {2602847},
MRREVIEWER = {Hansj\"org\ Geiges},
       DOI = {10.2140/gt.2010.14.611},
       URL = {https://doi.org/10.2140/gt.2010.14.611},
}

@article {CRCMP,
    AUTHOR = {Chernov, Vladimir and Rudyak, Yuli},
     TITLE = {Linking and causality in globally hyperbolic space-times},
   JOURNAL = {Comm. Math. Phys.},
  FJOURNAL = {Communications in Mathematical Physics},
    VOLUME = {279},
      YEAR = {2008},
    NUMBER = {2},
     PAGES = {309--354},
      ISSN = {0010-3616,1432-0916},
   MRCLASS = {53C50 (53C80 57Q45 57R17 83C75)},
  MRNUMBER = {2383590},
MRREVIEWER = {Robert\ J.\ Low},
       DOI = {10.1007/s00220-008-0414-8},
       URL = {https://doi.org/10.1007/s00220-008-0414-8},
}

@misc{Fan,
  author = {Fan, Zining},
  title = {Detecting {C}ausality with {C}onjugation {Q}uandles over {D}ihedral {G}roups},
  year = {2025},
  note = {Preprint 2025, \href{https://arxiv.org/abs/2509.03544}{arXiv:2509.03544}}
}

@article {Geroch,
    AUTHOR = {Geroch, Robert},
     TITLE = {Domain of dependence},
   JOURNAL = {J. Mathematical Phys.},
  FJOURNAL = {Journal of Mathematical Physics},
    VOLUME = {11},
      YEAR = {1970},
     PAGES = {437--449},
      ISSN = {0022-2488,1089-7658},
   MRCLASS = {83.35},
  MRNUMBER = {270697},
MRREVIEWER = {W.\ Israel},
       DOI = {10.1063/1.1665157},
       URL = {https://doi.org/10.1063/1.1665157},
}

@book {HE,
    AUTHOR = {Hawking, Stephen and Ellis, George},
     TITLE = {The large scale structure of space-time},
    SERIES = {Cambridge Monographs on Mathematical Physics},
    VOLUME = {No. 1},
 PUBLISHER = {Cambridge University Press, London-New York},
      YEAR = {1973},
     PAGES = {xi+391},
   MRCLASS = {83.58},
  MRNUMBER = {424186},
MRREVIEWER = {Michael\ P.\ Ryan, Jr.},
}

@article {Jain,
    AUTHOR = {Jain, Ayush},
     TITLE = {Detecting causality with symplectic quandles},
   JOURNAL = {Lett. Math. Phys.},
  FJOURNAL = {Letters in Mathematical Physics},
    VOLUME = {114},
      YEAR = {2024},
    NUMBER = {3},
     PAGES = {Paper No. 63, 22},
      ISSN = {0377-9017,1573-0530},
   MRCLASS = {57K12 (57K10 57K14)},
  MRNUMBER = {4742225},
MRREVIEWER = {Mohamed\ Elhamdadi},
       DOI = {10.1007/s11005-024-01808-w},
       URL = {https://doi.org/10.1007/s11005-024-01808-w},
}

@article {Khovanov,
    AUTHOR = {Khovanov, Mikhail},
     TITLE = {A categorification of the {J}ones polynomial},
   JOURNAL = {Duke Math. J.},
  FJOURNAL = {Duke Mathematical Journal},
    VOLUME = {101},
      YEAR = {2000},
    NUMBER = {3},
     PAGES = {359--426},
      ISSN = {0012-7094,1547-7398},
   MRCLASS = {57M27 (57R56)},
  MRNUMBER = {1740682},
       DOI = {10.1215/S0012-7094-00-10131-7},
       URL = {https://doi.org/10.1215/S0012-7094-00-10131-7},
       note = {Preprint 2000, \href{https://arxiv.org/abs/math/9908171}{arXiv:math/9908171}},
}

@misc{Leventhal,
  author = {Leventhal, Jack},
  title = {Alexander {Q}uandles and {D}etecting {C}ausality},
  year = {2022},
  note = {Preprint 2022, \href{https://arxiv.org/abs/2209.05670}{arXiv:2209.05670}}
}

@phdthesis{Low0,
  author = {Low, Robert},
  title = {Causal relations and spaces of null geodesics},
  school = {University of Oxford},
  year = {1988}
}

@article {Low1,
    AUTHOR = {Low, Robert},
     TITLE = {Twistor linking and causal relations},
   JOURNAL = {Classical Quantum Gravity},
  FJOURNAL = {Classical and Quantum Gravity},
    VOLUME = {7},
      YEAR = {1990},
    NUMBER = {2},
     PAGES = {177--187},
      ISSN = {0264-9381,1361-6382},
   MRCLASS = {83C60 (53C50 53C80 57Q45)},
  MRNUMBER = {1042879},
       DOI = {10.1088/0264-9381/7/2/011},
       URL = {https://doi.org/10.1088/0264-9381/7/2/011},
}

@article {Low3,
    AUTHOR = {Low, Robert},
     TITLE = {Twistor linking and causal relations in exterior
              {S}chwarzschild space},
   JOURNAL = {Classical Quantum Gravity},
  FJOURNAL = {Classical and Quantum Gravity},
    VOLUME = {11},
      YEAR = {1994},
    NUMBER = {2},
     PAGES = {453--456},
      ISSN = {0264-9381,1361-6382},
   MRCLASS = {83C60 (32L25 57M25)},
  MRNUMBER = {1259156},
MRREVIEWER = {Jorge\ Goncalves\ Cardoso},
       DOI = {10.1088/0264-9381/11/2/016},
       URL = {https://doi.org/10.1088/0264-9381/11/2/016},
}

@article {LowLegendrian,
    AUTHOR = {Low, Robert},
     TITLE = {Stable singularities of wave-fronts in general relativity},
   JOURNAL = {J. Math. Phys.},
  FJOURNAL = {Journal of Mathematical Physics},
    VOLUME = {39},
      YEAR = {1998},
    NUMBER = {6},
     PAGES = {3332--3335},
      ISSN = {0022-2488,1089-7658},
   MRCLASS = {83C10 (58C27 58F05 78A05)},
  MRNUMBER = {1623606},
MRREVIEWER = {Volker\ Perlick},
       DOI = {10.1063/1.532257},
       URL = {https://doi.org/10.1063/1.532257},
}

@article {NatarioTod,
    AUTHOR = {Nat\'ario, Jos\'e{} and Tod, Paul},
     TITLE = {Linking, {L}egendrian linking and causality},
   JOURNAL = {Proc. London Math. Soc. (3)},
  FJOURNAL = {Proceedings of the London Mathematical Society. Third Series},
    VOLUME = {88},
      YEAR = {2004},
    NUMBER = {1},
     PAGES = {251--272},
      ISSN = {0024-6115,1460-244X},
   MRCLASS = {53D10 (57M25 58K20)},
  MRNUMBER = {2018966},
MRREVIEWER = {Robert\ J.\ Low},
       DOI = {10.1112/S0024611503014424},
       URL = {https://doi.org/10.1112/S0024611503014424},
}

@article{HF1,
  author = {Ozsvath, Peter and Szabo, Zoltan},
  title = {Holomorphic disks and three-manifold invariants: properties and applications},
  journal = {Annals of Mathematics},
  volume = {159},
  number = {3},
  pages = {1159--1245},
  year = {2004}
}

@article{HF2,
  author = {Ozsvath, Peter and Szabo, Zoltan},
  title = {Holomorphic disks and topological invariants for closed three-manifolds},
  journal = {Annals of Mathematics},
  volume = {159},
  number = {3},
  pages = {1027--1158},
  year = {2004}
}

@incollection{Penrose,
  author = {Penrose, Roger},
  title = {The question of cosmic censorship},
  booktitle = {Black Holes and Relativistic Stars},
  publisher = {University of Chicago Press},
  address = {Chicago},
  pages = {103--122},
  year = {1998}
}

@Misc {GHKW,
    AUTHOR = {Garoufalidis, Stavros and Harper, Matthew and Kashaev, Rinat and Kohli,
             Ben-Michael and Wagner, Emmanuel},
     TITLE = {On the colored {L}inks--{G}ould polynomial},
      NOTE = {Preprint 2025,
             \href{https://arxiv.org/abs/2509.10911}{arXiv:2509.10911}},
}

@article {GeerKujawaPatureau,
    AUTHOR = {Geer, Nathan and Kujawa, Jonathan and Patureau-Mirand,
              Bertrand},
     TITLE = {Generalized trace and modified dimension functions on ribbon
              categories},
   JOURNAL = {Selecta Math. (N.S.)},
  FJOURNAL = {Selecta Mathematica. New Series},
    VOLUME = {17},
      YEAR = {2011},
    NUMBER = {2},
     PAGES = {453--504},
      ISSN = {1022-1824,1420-9020},
   MRCLASS = {18D10 (17B10 20C20 57R56)},
  MRNUMBER = {2803849},
MRREVIEWER = {Daniel\ David\ Moskovich},
       DOI = {10.1007/s00029-010-0046-7},
       URL = {https://doi.org/10.1007/s00029-010-0046-7},
}

@article {GPT,
    AUTHOR = {Geer, Nathan and Patureau-Mirand, Bertrand and Turaev,
              Vladimir},
     TITLE = {Modified quantum dimensions and re-normalized link invariants},
   JOURNAL = {Compos. Math.},
  FJOURNAL = {Compositio Mathematica},
    VOLUME = {145},
      YEAR = {2009},
    NUMBER = {1},
     PAGES = {196--212},
      ISSN = {0010-437X,1570-5846},
   MRCLASS = {57M27 (17B37 18D10)},
  MRNUMBER = {2480500},
MRREVIEWER = {Daniel\ David\ Moskovich},
       DOI = {10.1112/S0010437X08003795},
       URL = {https://doi.org/10.1112/S0010437X08003795},
}

@Misc{HKST,
  AUTHOR = {Harper, Matthew and Kohli, Ben-Michael and Song, Jiebo and Tahar, Guillaume},
  TITLE = {On some log-concavity properties of the {Alexander-Conway} and {Links-Gould} invariants},
  NOTE = {Preprint 2025,
          \href{https://arxiv.org/abs/2509.16868}{arXiv:2509.16868}},
}

@article {Ishii,
    AUTHOR = {Ishii, Atsushi},
     TITLE = {The {L}inks-{G}ould polynomial as a generalization of the
              {A}lexander-{C}onway polynomial},
   JOURNAL = {Pacific J. Math.},
  FJOURNAL = {Pacific Journal of Mathematics},
    VOLUME = {225},
      YEAR = {2006},
    NUMBER = {2},
     PAGES = {273--285},
      ISSN = {0030-8730},
   MRCLASS = {57M27 (57M25)},
  MRNUMBER = {2233736},
MRREVIEWER = {Nafaa Chbili},
       DOI = {10.2140/pjm.2006.225.273},
       URL = {https://doi.org/10.2140/pjm.2006.225.273},
}

@article{Kohli,
  title={On the {L}inks--{G}ould invariant and the square of the {A}lexander polynomial},
  author={Kohli, Ben-Michael},
  journal={Journal of Knot Theory and Its Ramifications},
  volume={25},
  number={02},
  pages={1650006},
  year={2016},
  publisher={World Scientific}
}

@article{KPM,
  title={Other quantum relatives of the {A}lexander polynomial through the {L}inks-{G}ould invariants},
  author={Kohli, Ben-Michael and Patureau-Mirand, Bertrand},
  journal={Proceedings of the American Mathematical Society},
  volume={145},
  number={12},
  pages={5419--5433},
  year={2017}
}

@Misc {Kohli-Tahar,
    AUTHOR = {Kohli, Ben-Michael and Tahar, Guillaume},
     TITLE = {A lower bound for the genus of a knot using the {L}inks-{G}ould invariant},
      NOTE = {Preprint 2023,
             \href{https://arxiv.org/abs/2310.15617}{arXiv:2310.15617}},
}

@misc{LNVdV25,
  title={A plumbing-multiplicative function from the Links-Gould invariant},
  author={Lopez-Neumann, Daniel and van der Veen, Roland},
NOTE = {Preprint 2025,
             \href{https://arxiv.org/abs/2502.12899}{arXiv:2502.12899}},
}
	
\end{document}